\DeclareRobustCommand{\SkipTocEntry}[4]{}
\newcommand\@dotsep{4.5}
\def\@tocline#1#2#3#4#5#6#7{\relax
  \ifnum #1>\c@tocdepth % then omit
  \else
    \par \addpenalty\@secpenalty\addvspace{#2}%
    \begingroup \hyphenpenalty\@M
    \@ifempty{#4}{%
      \@tempdima\csname r@tocindent\number#1\endcsname\relax
    }{%
      \@tempdima#4\relax
    }%
    \parindent\z@ \leftskip#3\relax \advance\leftskip\@tempdima\relax
    \rightskip\@pnumwidth plus1em \parfillskip-\@pnumwidth
    #5\leavevmode\hskip-\@tempdima #6\relax
    \leaders\hbox{$\m@th
      \mkern \@dotsep mu\hbox{.}\mkern \@dotsep mu$}\hfill
    \hbox to\@pnumwidth{\@tocpagenum{#7}}\par
    \nobreak
    \endgroup
  \fi}
\DeclareFontFamily{OT1}{rsfs}{}
\DeclareFontShape{OT1}{rsfs}{n}{it}{<-> rsfs10}{}
\DeclareMathAlphabet{\curly}{OT1}{rsfs}{n}{it}
\newcommand{\cO}{\mathcal{O}}
\newcommand{\T}{\mathbf{T}}
\newcommand\C{\mathbb C}
\newcommand\CC{\mathsf C}
\newcommand\JJ{\mathsf P}
\newcommand\MM{\mathsf{M}}
\newcommand\OO{\mathcal{O}}
\newcommand\PP{\mathbb P}
\newcommand\Pp{{\mathbb P}^1}
\newcommand\FF{\mathbb F}
\newcommand\FFF{\mathsf{F}}
\newcommand\Q{\mathbb Q}
\newcommand\Z{\mathbb Z}
\newcommand\bW{\mathsf{W}}
\newcommand\ZZ{\mathsf Z}
\newcommand\com{\mathbb C}
\newcommand{\bV}{\mathsf{V}}
\newcommand{\bF}{\mathsf{F}}
\newcommand\Into{\ar@{^{ (}->}[r]}
\newcommand\ch{\operatorname{ch}}
\newcommand\beq{\begin{equation}}
\newcommand\eeq{\end{equation}}
\newtheorem{thm}{Theorem}
\newtheorem{lem}{Lemma}
\newtheorem{prop}{Proposition}
\title{\textbf{Descendents on local curves:\\ Stationary theory}}
\author{R. Pandharipande and A. Pixton}
\date{September 2011}
\begin{document}

\begin{abstract} \noindent
The stable pairs theory of local curves in 3-folds 
(equivariant with respect to the scaling 2-torus)
is studied 
with stationary descendent insertions.
Reduction rules are found to lower descendents
when higher than the degree. Factorization 
then yields a simple proof of rationality in
the stationary case and a proof
of the functional equation related to inverting $q$. The
method yields an
effective determination of stationary descendent
integrals.
The series $\mathsf{Z}^{\mathsf{cap}}_{d,(d)}( \tau_d(\mathsf{p}))$
 plays a special role and is calculated exactly using
the stable pairs vertex and an analysis
of the solution of the  quantum differential 
equation for the Hilbert scheme of points of the plane.
\end{abstract}

\maketitle

\setcounter{tocdepth}{1} 
\tableofcontents

%%%%%%%%%%%%%%%%%%%%%%%%%%%%%%%%%%%%%%%%%%%%%%%%%%%%%%%%%%%%%%%%%%%%%%%%%%%

\setcounter{section}{-1}
\section{Introduction}

\subsection{Relative local curves} \label{lc1}
The geometry of a 3-fold {\em local curve} consists of a  
 split 
rank 2 bundle $N$ on a nonsingular projective curve $C$ of genus $g$,
\begin{equation}\label{ffg}
N=L_1\oplus L_2.
\end{equation}
The splitting determines a scaling action of a 2-dimensional torus
$$T=\C^* \times \C^*$$ on $N$.
The {\em level} of the splitting is the pair of integers
$(k_1,k_2)$ where,
$$k_i= {\text {deg}}(L_i).$$
Of course, the scaling action and the level
depend upon the choice of splitting \eqref{ffg}.

The fiber of $N$ over a point $p\in C$ determines a $T$-invariant
divisor 
$$N_p \subset N$$
isomorphic to $\com^2$ with the standard $T$-action.
We will consider the local stable pairs theory of $N$
relative to the divisor
$$S= \bigcup_{i=1}^r N_{p_i} \subset N$$
determined by the fibers over $p_1,\ldots,p_r\in C$.
Let $P_n(N/S,d)$ denote the relative moduli space of stable pairs{\footnote
{The curve class is $d$ times the zero section $C \subset N$.}}, see 
\cite{pt}.

For each $p_i$, let $\eta^i$
be a partition of $d$ weighted
by the equivariant Chow ring, 
$$A_T^*(N_{p_i},{\mathbb Q})\stackrel{\sim}{=} {\mathbb Q}[s_1,s_2],$$
of the fiber $N_{p_i}$.
By Nakajima's construction,
a weighted partition $\eta^i$ determines a $T$-equivariant class 
$$\CC_{\eta^i} \in A_T^*(\text{Hilb}(N_{p_i},d), \mathbb{Q})$$
in the
Chow ring of the Hilbert scheme of points.
In the theory of stable pairs, the weighted partition $\eta^i$
specifies relative
conditions via the boundary map
$$\epsilon_i: P_n(N/S,d)\rightarrow \text{Hilb}(N_{p_i},d).$$

An element $\eta\in {\mathcal P}(d)$ of the set of 
partitions of $d$ may be 
viewed as a 
weighted partition with all weights set to the identity class
$$1\in A^*_T(N_{p_i},{\mathbb Q})\ .$$
The Nakajima basis of $A_T^*(\text{Hilb}(N_{p_i},d), \mathbb{Q})$ consists of 
identity weighted partitions indexed by ${\mathcal P}(d)$.

Let $s_1,s_2 \in H^*_\T(\bullet)$ be the first Chern classes
of the standard representations of the first and second
$\C^*$-factors of $T$ respectively.
The $T$-equivariant intersection pairing in the Nakajima basis is
$$g_{\mu\nu}=\int_{\text{Hilb}(N_{p_i},d)} \CC_\mu \cup \CC_\nu =
\frac{1}{(s_1s_2)^{\ell(\mu)}} 
\frac{(-1)^{d-\ell(\mu)}} 
{{\mathfrak{z}}(\mu)}\ {\delta_{\mu,\nu}},$$
where
$${\mathfrak z}(\mu) =  \prod_{i=1}^{\ell(\mu)} \mu_i \cdot 
|\text{Aut}(\mu)|.$$
Let $g^{\mu\nu}$ be the inverse matrix.

%The notation $\eta([0])$ will be used to set all
%weights to $[0]\in A^*_T(N_{p_i},{\mathbb Q} )$.
%Since
%$$[0]= s_1s_2 \in A^*_T(N_{p_i}, {\mathbb Q} ),$$
%the weight choice has only a mild effect.

\subsection{Descendents}

We define descendents in the relative stable pairs
theory of local curves by the slant products
with universal sheaf following \cite{partone}.

There exists a universal sheaf
on the universal 3-fold  $\mathcal{N}$ over the moduli space $P_{n}(N/S,d)$,
$$\FF \rightarrow \mathcal{N}\ .$$
For a stable pair $[\OO\to F]\in P_{n}(N/S,d)$, the restriction of
$\FF$
to the fiber
 $$\mathcal{N}_{[\OO \to F]} \subset 
\mathcal{N}
$$
is canonically isomorphic to $F$.
Let
$$\pi_N\colon \mathcal{N} \to N,$$
$$\pi_P\colon \mathcal{N} \to P_{n}(N/S,d)$$
 be the canonical projections.

By the stability conditions for the relative theory of stable pairs,
 $\FF$ has a finite resolution 
by locally free sheaves.
Hence, the Chern character of the universal sheaf $\FF$ 
is well-defined.
By definition, the operation
$$
\pi_{P*}\big(\pi_N^*(\gamma)\cdot \text{ch}_{2+i}(\FF)
\cap(\pi_P^*(\ \cdot\ )\big)\colon 
H_*(P_{n}(N/S,d))\to H_*(P_{n}(N/S,d))
$$
is the action of the descendent $\tau_i(\gamma)$, where
$\gamma \in H^*(C,\Z)$.
The push-forwards are defined by $T$-equivariant
residues as in \cite{BryanP,lcdt}.

We will use bracket notation for descendents,
\begin{equation}\label{lwww}
\left\langle \prod_{j=1}^\ell \tau_{i_j}(\gamma_{j}) 
\right\rangle_{\!n,d}^{\! N,\eta^1,\ldots,\eta^r} =
\int _{[P_{n} (N/S,d)]^{vir}}
  \prod_{j=1}^\ell \tau_{i_j}(\gamma_{j})\ 
\prod_{i=1}^r \epsilon_i^*(\CC_{\eta^i})\ .
\end{equation}
The partition function is denoted by
$$
\ZZ_{d,\eta^1,\ldots,\eta^r}^{N/S}\left(   \prod_{j=1}^\ell \tau_{i_j}(\gamma_{j})
\right)^T
=\sum_{n}
\left\langle \prod_{j=1}^\ell \tau_{i_j}(\gamma_{j}) 
\right\rangle_{\!n,d}^{\! N,\eta^1,\ldots,\eta^r}q^n.
$$
The following basic result is proved in \cite{partone}.

\vspace{10pt}
\noindent{\bf Theorem.}
{\em $\ZZ_{d,\eta^1,\dots,\eta^r}
^{N/S}\big(   \prod_{j=1}^k \tau_{i_j}(\gamma_{j})
\big)^T$ is the 
Laurent expansion in $q$ of a rational function in $\mathbb{Q}(q,s_1,s_2)$.}
\vspace{10pt}

\subsection{Stationary theory}
Our main results here concern 
 stationary descendents in the stable pairs theory of 
local curves.
Let $$\mathsf{p}\in H^2(C,\mathbb{Z})$$ be the class of a point.
The {\em stationary descendents} are $\tau_k(\mathsf{p})$.
%$$\ZZ_{d,\eta^1,\dots,\eta^r}
%^{N/S}\big(   \prod_{j=1}^k \tau_{i_j}(\mathsf{p})
%\big)^T\ . $$
The methods of the paper,
while not fully applicable to other descendents, 
are much simpler and more effective
than the techniques of \cite{partone,part3}.

Our first result concerns reduction rules for stationary
descendents in the theory of local curves.
\begin{thm} \label{jjjttt2}
For $k>d$, there exist universal polynomials
$$f_{k,d}(x_1, \ldots,x_d) \in \mathbb{Q}(s_1,s_2)[
x_1, \ldots, x_d]$$
for which
the degree $d$ descendent theory of local curves satisfies
 the reduction rule
$$\tau_{k}(\mathsf{p}) \mapsto
f_{k,d}(\tau_1(\mathsf{p}), \ldots, \tau_d(\mathsf{p}))\ . $$
\end{thm}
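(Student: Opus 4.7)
The plan is to reduce the statement to a universal polynomial identity on the Hilbert scheme $\Hilb(\com^2,d)$ and then apply Newton's identities to the tautological rank $d$ bundle. \emph{Step 1 (Reduction to the cap).} By the degeneration formula for the relative stable pairs theory, the marked point $p\in C$ carrying the stationary insertion $\tau_k(\mathsf{p})$ can be bubbled off onto a $\Pp$-cap. If $W$ denotes the product of the remaining descendent insertions, this produces a decomposition
\[
\ZZ^{N/S}_{d,\eta^1,\ldots,\eta^r}\!\bigl(\tau_k(\mathsf{p})\cdot W\bigr)^T \;=\; \sum_{\mu,\nu} g^{\mu\nu}\,\ZZ^{\mathsf{cap}}_{d,\mu}\!\bigl(\tau_k(\mathsf{p})\bigr)\cdot\ZZ^{(N/S)'}_{d,\nu,\eta^1,\ldots,\eta^r}\!\bigl(W\bigr),
\]
so it suffices to find a polynomial relation
\[
\ZZ^{\mathsf{cap}}_{d,\mu}\!\bigl(\tau_k(\mathsf{p})\bigr) \;=\; f_{k,d}\bigl(\ZZ^{\mathsf{cap}}_{d,\mu}(\tau_1(\mathsf{p})),\ldots,\ZZ^{\mathsf{cap}}_{d,\mu}(\tau_d(\mathsf{p}))\bigr)
\]
universal in $\mu\in\P(d)$; any such relation then transfers to the geometry on the left.

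\emph{Step 2 (Passage to the Hilbert scheme).} The cap evaluations assemble, via the Nakajima basis, into a class $\mathsf{D}_k\in H^*_T(\Hilb(\com^2,d))\otimes\mathbb{Q}(q,s_1,s_2)$. The next step is to identify $\mathsf{D}_k$, up to explicit equivariant and $q$-dependent prefactors common to all $k$, with the Chern character $\ch_k(F^{[d]})$ of the tautological rank $d$ bundle $F^{[d]}$ on $\Hilb(\com^2,d)$. The justification is that the restriction of the universal sheaf $\FF$ to the fiber $N_p$ of the cap at the insertion point is canonically the universal quotient on the Hilbert scheme, so the fiber integration of $\pi_N^*(\mathsf{p})\cdot \ch_{2+k}(\FF)$ produces $\ch_k(F^{[d]})$ in the expected cohomological degree.

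\emph{Step 3 (Newton's identities and conclusion).} Since $F^{[d]}$ has rank $d$, we have $c_{d+1}(F^{[d]})=0$, and the standard Newton identities express $\ch_k(F^{[d]})$ for every $k>d$ as a $\mathbb{Q}$-polynomial in $\ch_1(F^{[d]}),\ldots,\ch_d(F^{[d]})$. Pulling this identity back through Step~2 yields $f_{k,d}\in\mathbb{Q}(s_1,s_2)[x_1,\ldots,x_d]$, and Step~1 then globalizes it to any local curve geometry. The main obstacle is Step~2: carrying out the equivariant localization on the cap with \emph{uniform} control of the prefactors across all $k$, so that a single polynomial identity among the Chern characters $\ch_k(F^{[d]})$ descends to a single polynomial identity among the descendents $\tau_k(\mathsf{p})$. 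Once this uniformity is established, Newton's identities finish the argument.
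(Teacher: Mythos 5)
Your overall strategy---degenerate the stationary insertion onto a cap, then exploit that the universal sheaf restricted to the fiber is a ``length $d$'' object so that power sums / Newton's identities force relations for $k>d$---is exactly the idea that drives the paper's proof. But two of your steps do not work as written. In Step 1, the relation you claim suffices, $\ZZ^{\mathsf{cap}}_{d,\mu}(\tau_k(\mathsf{p}))=f_{k,d}\bigl(\ZZ^{\mathsf{cap}}_{d,\mu}(\tau_1(\mathsf{p})),\ldots,\ZZ^{\mathsf{cap}}_{d,\mu}(\tau_d(\mathsf{p}))\bigr)$, is a polynomial relation among the \emph{numerical values} of one-point cap series. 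The degeneration formula is linear in the cap factor, so such a relation does not yield the theorem: the right side of the reduction rule involves monomials $\prod_i\tau_i(\mathsf{p})^{\sigma_i}$ inserted in a single correlator, and $\ZZ^{\mathsf{cap}}_{d,\mu}(\tau_1(\mathsf{p})\tau_1(\mathsf{p}))\neq\ZZ^{\mathsf{cap}}_{d,\mu}(\tau_1(\mathsf{p}))^2$. What is needed is an identity at the level of insertions, valid in the presence of arbitrary additional insertions on the cap; this is precisely what Proposition~\ref{jjtt} provides.

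The more serious gap is Step 2. The identification of your class $\mathsf{D}_k$ with $\ch_k$ of the tautological rank $d$ bundle on $\Hilb(\com^2,d)$ holds only for the $q^d$ coefficient: for $n>d$ the moduli space $P_n(N/N_\infty,d)$ is not the Hilbert scheme, and at a $\mathbf{T}$-fixed point the restriction of $\FF$ to the fiber over the insertion point is the class $\sum_{(a,b)\in\mu}t_1^at_2^bt_3^{-c_{a,b}}$, which depends on the box heights $c_{a,b}$ and the third torus weight $s_3$ and is not pulled back from the universal quotient on the Hilbert scheme. The feature that survives---and is all that is needed---is that this class is a sum of exactly $d$ monomials, so the descendent weight at each fixed point is a symmetric function $\mathfrak{t}_k$ in $d$ variables, with top-degree part $\mathfrak{p}_k/k!$ and lower-order corrections coming from the factor $(1-e^{zs_1})(1-e^{zs_2})/(s_1s_2)$. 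Because the transition from the power sums $\mathfrak{p}_j$ to the $\mathfrak{t}_j$ is triangular with unit diagonal, generation of symmetric functions in $d$ variables by $\mathfrak{p}_1,\ldots,\mathfrak{p}_d$ gives $\mathfrak{t}_k=f_{k,d}(\mathfrak{t}_1,\ldots,\mathfrak{t}_d)$ with coefficients in $\Q(s_1,s_2)$; this triangular change of variables, not a common multiplicative prefactor, is the source of the $\Q(s_1,s_2)$-coefficients. Since the identity holds fixed point by fixed point, where products of insertions do contribute multiplicatively, it holds inside every correlator---which is the multi-point statement your Step 1 actually requires.
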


Explicitly,  Theorem \ref{jjjttt2} yields the following equality for
${T}$-equivariant integrals:
\begin{multline*}
\ZZ^{N/S}_{d,\eta^1,\ldots,\eta^r}
\left(\tau_k(\mathsf{p}) \cdot \prod_{j=1}^{\ell}\tau_{i_j}(\gamma_j)
\right)^{{T}} = \\ 
\ZZ^{N/S}_{d,\eta^1,\ldots,\eta^r}
\left(
f_{k,d}(\tau_1(\mathsf{p}), \ldots \tau_d(\mathsf{p}))
 \cdot \prod_{j=1}^{\ell}\tau_{i_j}(\gamma_j)
\right)^{{T}}   \end{multline*}
for $k>d$ and all $\gamma_j \in H^*(C,\mathbb{Z})$.
Theorem \ref{jjjttt2} is proven in Section \ref{aaa}.

Via Theorem \ref{jjjttt2},
factorization properties of the relative conditions,
and the established rationality of the stable pairs
theory of local curves without insertions, we obtain our second result
in Section \ref{bbb}.

\begin{thm} \label{ghht4}
The stationary series
$\ZZ_{d,\eta^1,\dots,\eta^r}
^{N/S}\big(   \prod_{j=1}^k \tau_{i_j}(\mathsf{p})
\big)^T$ is the 
Laurent expansion in $q$ of a rational function 
$F(q,s_1,s_2)\in \mathbb{Q}(q,s_1,s_2)$
satisfying the functional equation
\[
F(q^{-1},s_1,s_2) = (-1)^{\Delta+|\eta|-\ell(\eta)
+
\sum_{j=1}^k
i_j}q^{-\Delta}F(q,s_1,s_2),
\]
where the constants are defined by 
$$\Delta = \int_{\beta}c_1(T_N),\ \ \ 
|\eta|=\sum_{i=1}^r |\eta^i|,\ \ \  \text{and} \ \ \
\ell(\eta)=\sum_{i=1}^r \ell(\eta^i) 
\ .$$
\end{thm}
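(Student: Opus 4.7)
The plan is to combine Theorem \ref{jjjttt2} with degeneration in order to reduce the functional equation for the stationary descendent theory to the known functional equation for the descendent-free relative theory of local curves. By Theorem \ref{jjjttt2}, I may first replace each $\tau_{i_j}(\mathsf{p})$ with $i_j>d$ by the polynomial $f_{i_j,d}(\tau_1(\mathsf{p}),\ldots,\tau_d(\mathsf{p}))$, so without loss of generality all $i_j\le d$. A parity check is needed to verify that this substitution preserves $\sum_j i_j$ modulo $2$; this should follow from a grading property of the reduction polynomials produced in Section \ref{aaa}, namely that each monomial $\tau_{a_1}(\mathsf{p})\cdots\tau_{a_m}(\mathsf{p})$ appearing in $f_{k,d}$ satisfies $\sum_s a_s\equiv k\pmod 2$.

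Next, for each descendent insertion at a point $p_j\in C$, I would degenerate $N/S$ so that a cap $(\com^2\times\PP^1)/(\com^2)_\infty$ bubbles off at $p_j$ carrying the descendent. The degeneration formula for relative stable pairs then gives
\[
\ZZ^{N/S}_{d,\vec\eta}\bigl(\tau_{i_j}(\mathsf{p})\cdots\bigr) \;=\; \sum_{\mu} g^{\mu\mu}\,\ZZ^{\mathsf{cap}}_{d,\mu}\bigl(\tau_{i_j}(\mathsf{p})\bigr)\cdot \ZZ^{N/S\cup\{p_j\}}_{d,\vec\eta,\mu}(\cdots),
\]
and iterating over all $k$ descendent insertions expresses the stationary series as a finite $\mathbb{Q}(s_1,s_2)$-linear combination of products of cap factors $\ZZ^{\mathsf{cap}}_{d,\mu_j}(\tau_{i_j}(\mathsf{p}))$ times a single bulk factor $\ZZ^{N/S\cup\{p_1,\ldots,p_k\}}_{d,\vec\eta,\vec\mu}$ with no descendents remaining.

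The bulk factor satisfies the (descendent-free) functional equation with sign $(-1)^{\Delta+|\eta|+|\vec\mu|-\ell(\eta)-\ell(\vec\mu)}$ and $q$-exponent $-\Delta$; here $\Delta$ is unchanged under the degeneration since each cap contributes $\Delta=0$. Granting the cap functional equation
\[
\ZZ^{\mathsf{cap}}_{d,\mu}\bigl(\tau_i(\mathsf{p})\bigr)(q^{-1}) \;=\; (-1)^{|\mu|-\ell(\mu)+i}\,\ZZ^{\mathsf{cap}}_{d,\mu}\bigl(\tau_i(\mathsf{p})\bigr)(q),
\]
each $\mu_j$ contributes $|\mu_j|-\ell(\mu_j)$ twice (once from its cap factor and once from the bulk partition $\vec\mu$), giving an even contribution; only $\sum_j i_j$ survives among the cap signs, and combining with the bulk sign one recovers exactly the claimed $(-1)^{\Delta+|\eta|-\ell(\eta)+\sum_j i_j}q^{-\Delta}$.

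The main obstacle is therefore the cap functional equation with a stationary descendent. I would attack it by direct analysis of the $T$-equivariant stable pairs vertex on the cap: the cap localizes to a single vertex with one nontrivial leg (the relative condition $\mu$) and a descendent insertion supported at the non-relative point of $\PP^1$, and the involution $q\mapsto q^{-1}$ on the vertex generating function is governed by Serre duality on the ambient local $\com^3$-geometry. Alternatively, one may further degenerate the cap into two caps joined along a tube and set up an induction on the descendent degree $i$, trading $\tau_i(\mathsf{p})$ for new relative conditions at each step and terminating at the descendent-free cap, where the functional equation is classical.
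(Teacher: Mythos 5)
Your first two steps coincide with the paper's: Theorem \ref{jjjttt2} plus the parity property of the reduction polynomials (Lemma \ref{parity}) handles $i_j>d$, and degenerating each remaining $\tau_{i_j}(\mathsf{p})$ onto its own cap reduces everything to the descendent-free theory of local curves together with the one-pointed caps $\ZZ^{\mathsf{cap}}_{d,\mu}(\tau_{i\le d}(\mathsf{p}))^T$. The gap is in the step you yourself flag as ``the main obstacle'' and then only sketch: the functional equation for these cap series. Neither of your two proposed attacks closes it. A direct $q\mapsto q^{-1}$ analysis of the equivariant descendent vertex is exactly what the authors state they cannot do (``we do not know how to derive the functional equation from the methods of \cite{partone}''): the vertex is an infinite sum over box configurations with no visible symmetry under inverting $q$, and no Serre-duality argument at the level of the vertex is available. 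Your alternative --- degenerating the cap and setting up an induction that ``trades $\tau_i(\mathsf{p})$ for new relative conditions'' --- has no mechanism behind it: the relative degeneration formula never converts a descendent insertion into boundary data, so there is no decreasing quantity and the induction does not start. (Such a descendent/relative correspondence exists but is a separate, difficult theorem, not something one gets from degenerating a cap into a cap and a tube.)

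What the paper actually does at this point is a second, geometrically different factorization (Proposition \ref{cprnnn}): it embeds the cap in the compact geometry $\PP^2\times\PP^1/\PP^2_\infty$, compares the dimension of the integrand with the virtual dimension, and uses the $s_1+s_2$ divisibility of \cite{mpt,lcdt} to force the auxiliary compact series to be monomials in $q$; this reduces all one-pointed caps to the single family $\ZZ^{\mathsf{cap}}_{c,(c)}(\tau_c(\mathsf{p}))^T$, $1\le c\le d$. That family is then evaluated in closed form --- Theorem \ref{ytytyt}, which is the bulk of the paper, via localization and the quantum differential equation of $\Hilb(\C^2,d)$ --- and the functional equation is simply read off the explicit answer $\frac{q^d}{d!}\frac{s_1+s_2}{s_1s_2}\frac12\sum_{i=1}^d\frac{1+(-q)^i}{1-(-q)^i}$. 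Without either this computation or an actual proof of the cap functional equation, your argument is incomplete. A smaller bookkeeping error: the cap has $\Delta=\int_{d\beta}c_1(T_N)=2d$, not $0$, so the cap functional equation you ``grant'' is missing a factor $q^{-2d}$; the discrepancy is compensated by the $q^{-d}$ carried by the gluing term and the relation $\Delta=\Delta_1+\Delta_2-2d$ at each node, but as written your sign and $q$-power accounting does not close.
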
 

Here, $T_N$ is the tangent bundle of the 3-fold $N$, and
$\beta$ is the curve class given by $d$ times the $0$-section.
Our proof of Theorem \ref{ghht4} is much easier 
 than the
rationality results of \cite{partone}. Moreover,
 we do {not} know how to derive the
functional equation from the methods of \cite{partone}.
%However, the full results
%of \cite{partone} including descendents of the identity class
%and the odd cycles are deeper.

As a step in the proof of
Theorem \ref{ghht4},
we show the entire stationary descendent theory is
{\em determined} from the theory of local curves  without insertions
and the set of series
$$\mathsf{Z}^{\mathsf{cap}}_{d,(d)}( \tau_d(\mathsf{p}))^T =
\sum_{n}
\Big\langle \tau_{d}(\mathsf{p}) 
\Big\rangle_{\!n,d}^{\! N,(d)}q^n\ , \ \ \ \  d>0 \ .$$
Here, the cap geometry is $\Pp$ relative to $\infty\in \Pp$.
A central result of the paper is the following calculation.

\begin{thm}\label{ytytyt}
We have
$$\mathsf{Z}^{\mathsf{cap}}_{d,(d)}( \tau_d(\mathsf{p}))^T =
\frac{q^d}{d!}\left(\frac{s_1+s_2}{s_1s_2}\right)
\frac{1}{2}\sum_{i=1}^d  \frac{ 1+(-q)^{i}}{1-(-q)^i} \ . $$
\end{thm}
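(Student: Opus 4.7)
\textbf{Proof plan for Theorem \ref{ytytyt}.} The plan is to combine $T$-equivariant localization on the cap with an analysis of the quantum differential equation (QDE) for $\mathrm{Hilb}(\C^2,d)$ due to Okounkov--Pandharipande. First, I would introduce an auxiliary $\C^*$-action scaling $\Pp$ with fixed points $0$ and $\infty$, and compute the $T \times \C^*$-equivariant residue. The point class $\mathsf{p} \in H^2(\Pp,\Z)$ localizes to $0$, so $\tau_d(\mathsf{p})$ contributes only through the descendent stable pairs vertex at the fiber over $0$. Using the vertex formalism, this recasts the cap series as a pairing
\[
\mathsf{Z}^{\mathsf{cap}}_{d,(d)}(\tau_d(\mathsf{p}))^T
\;=\; \bigl\langle \mathsf{V}_d(\tau_d(\mathsf{p})),\ \CC_{(d)} \bigr\rangle
\]
in the $T$-equivariant cohomology of $\mathrm{Hilb}(\C^2,d)$, where $\CC_{(d)}$ is the Nakajima class of the length-one partition and $\mathsf{V}_d(\tau_d(\mathsf{p}))$ is the Hilbert-scheme class produced by the insertion-bearing vertex.

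\medskip

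Next, I would identify this pairing with a matrix element of the fundamental solution $\Phi(q)$ of the QDE for $\mathrm{Hilb}(\C^2,d)$. The insertion of $\tau_d(\mathsf{p})$ --- a stationary descendent whose codimension equals the number $d$ of points being parametrized --- should correspond to the action of an explicit quantum operator on the Fock space $\bigoplus_d H^*_T(\mathrm{Hilb}(\C^2,d))$, analogous to the way Chern characters of the tautological bundle act in the Okounkov--Pandharipande picture. The vector $\CC_{(d)}$ is, up to normalization, the one-part Nakajima vector $\alpha_{-d}|\varnothing\rangle$. On this one-part vector the relevant quantum operator acts by a scalar; plugging into the explicit one-part restriction of the Okounkov--Pandharipande solution of the QDE, this scalar evaluates to $\tfrac12 \sum_{i=1}^d \tfrac{1+(-q)^i}{1-(-q)^i}$.

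\medskip

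Finally, I would verify the prefactor $\tfrac{q^d}{d!}\bigl(\tfrac{s_1+s_2}{s_1 s_2}\bigr)$ by computing the leading $q^d$-coefficient directly: it arises from the single torus-fixed stable pair supported on the length-$d$ thickening of the zero section, whose vertex computation produces the stated Euler-class normalization. Combining the three steps yields the closed formula.

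\medskip

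\textbf{Main obstacle.} The hard part is the second step: matching the stable-pairs descendent $\tau_d(\mathsf{p})$ on the cap with the correct Fock-space operator. This requires a careful analysis of the descendent vertex --- in particular the contribution of $\mathrm{ch}_{2+d}$ of the universal sheaf on torus-fixed monomial ideals of colength $d$ --- together with the explicit structure of $\Phi(q)$ from the Okounkov--Pandharipande theory. Once the identification is in place, the restriction to the one-part sector is a finite-dimensional computation, and the remaining Fock-space evaluation on $\CC_{(d)}$ is routine.
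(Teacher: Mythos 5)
Your overall frame is the right one and matches the paper's: localize with the extra $\C^*$ so that $\tau_d(\mathsf{p})$ enters only through the descendent vertex over $0$, and use the quantum differential equation for $\mathrm{Hilb}(\C^2,d)$ to control the contribution over $\infty$. But your central second step hides a genuine gap. After localization the series is not a matrix element of the fundamental solution acting on $\CC_{(d)}$; it is the sum $\sum_{|\mu|=d}\bW_\mu^{\mathsf{Vert}}(\tau_d([0]))\cdot\bW_\mu^{(0,0)}\cdot\mathsf{S}^\mu_{(d)}$, in which only the rubber factor $\mathsf{S}^\mu_{(d)}$ is governed by the QDE. The descendent vertex $\bW_\mu^{\mathsf{Vert}}(\tau_d([0]))$ is a separate sum over box configurations on the monomial ideal $\mu$ and has no known interpretation as a Fock-space operator whose one-part matrix element is $\frac12\sum_{i=1}^d\frac{1+(-q)^i}{1-(-q)^i}$. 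Indeed, the $q$-dependent quadratic part of the QDE operator $\MM$ acts on $\alpha_{-d}|\varnothing\rangle$ by a multiple of $\frac{(-q)^d+1}{(-q)^d-1}$ alone --- it cannot by itself produce the sum over all $i\le d$. In the actual computation the coefficient of each $\frac{(-q)^r}{1-(-q)^r}$ receives a contribution $A_r$ from the vertex (rim-hook cylinders on hook partitions $\alpha_a$, the only $\mu$ surviving because $\chi^\mu((d))=0$ otherwise) and a contribution $B_r$ from the rubber, and the clean answer appears only through the nontrivial binomial-identity cancellation $A_r+B_r=\frac{1}{d!}$. Positing that ``the relevant quantum operator acts by a scalar'' on $\CC_{(d)}$ assumes precisely what has to be proved.

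Two further ingredients are missing. First, the whole specialization strategy (setting $s_2=-s_1$ and extracting the constant term in $s_3/s_1$) is only legitimate because a prior dimension and divisibility argument shows that $\frac{s_1s_2}{s_1+s_2}q^{-d}\,\mathsf{Z}^{\mathsf{cap}}_{d,(d)}(\tau_d([0]))^{\T}$ lies in $\Q[[q]]$; without that lemma the individual localization terms carry genuine $s_1,s_2,s_3$ dependence and your evaluation scheme is not defined. Second, your description of the leading coefficient is incorrect: at Euler characteristic $n=d$ the moduli space is all of $\mathrm{Hilb}(\C^2,d)$, not a single torus-fixed pair, so the $q^d$-coefficient is the classical pairing $\langle\tau_d,\CC_{(d[0])}\rangle=\frac{s_1+s_2}{2\cdot(d-1)!}$, whose evaluation itself requires a separate argument (one cannot work mod $s_1+s_2$ since the answer is a multiple of it; the paper uses an interpolation in the descendent index). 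These are not cosmetic omissions --- they are where the actual content of the theorem lives.
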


In the above formula, the coefficient of $q^d$, 
$$ \big\langle \tau_d, (d) \big\rangle_{\text{Hilb}(\com^2,d)}=
\frac{1}{2\cdot (d-1)!} \left(\frac{s_1+s_2}{s_1s_2}\right),$$
is the classical $T$-equivariant pairing on the
Hilbert scheme of $d$ points on $\C^2$.
The proof of Theorem \ref{ytytyt} 
%along with a combinatorial formula for $\mathsf{R}^d_{i}$ 
is given in Section \ref{ccc}.

Very few exact calculations for descendents in
 3-fold sheaf theories have previously been found. 
Theorem \ref{ytytyt} provides a closed
form for the most fundamental descendent series in the stationary
theory of local curves.
The derivation uses the localization methods of \cite{pt2} together
with an analysis of the fundamental solution of the quantum
differential equation of the Hilbert scheme of points of the
plane.

The descendent partition 
functions for the stable pairs theory of local
curves have very restricted
denominators when considered as rational functions in $q$
with coefficients in $\Q(s_1,s_2)$.
A basic result proven in Section 9 of
\cite{partone} is the following.

\vspace{10pt}
\noindent{\bf Theorem.}
{\em The denominators of the degree $d$ descendent partition functions
$\ZZ_{d,\eta^1,\dots,\eta^r}
^{N/S}\big(   \prod_{j=1}^k \tau_{i_j}(\mathsf{p})
\big)^T$
are  products of factors of the form $q^s$ and
$$1-(-q)^r$$
for $1\leq r \leq d$.}
\vspace{10pt}

Certainly the calculation of Theorem \ref{ytytyt}
is consistent with the denominator result.

\subsection{Acknowledgements}
We thank  J. Bryan, D. Maulik, A. Oblom\-kov, A. Okounkov, 
and
R. Thomas for several discussions about stable pairs, descendents,
and the quantum cohomology of the Hilbert scheme of points of
the plane. V. Shende's questions at the Newton Institute
about the $q\leftrightarrow q^{-1}$
symmetry for descendents
prompted us to work out the proof of the
functional equation.

R.P. was partially supported by NSF grant DMS-0500187
and DMS-1001154.
A.P. was supported by a NDSEG graduate fellowship.
The paper was completed while visiting the 
Instituto Superior T\'ecnico in Lisbon where
R.P. was supported by a Marie Curie fellowship and
a grant from the Gulbenkian foundation.

\section{Reduction for stationary descendents} \label{aaa}

\subsection{Cap geometry} \label{legger}
The capped 1-leg geometry concerns 
the trivial bundle,
$$N = \cO_{\PP^1} \oplus \cO_{\PP^1} \rightarrow \PP^1\ ,$$ 
relative to the fiber
$$N_\infty \subset N$$
over $\infty \in \PP^1$.
The total space $N$ naturally carries an action of a 
3-dimensional torus $$\mathbf{T} = T \times \com^*\ .$$
Here, $T$ acts as before by  scaling the
factors of $N$ and preserving the relative divisor $N_\infty$. 
The $\com^*$-action
on the base $\PP^1$ which fixes the points  $0, \infty\in \PP^1$ 
lifts to an additional $\com^*$-action on $N$ fixing
$N_\infty$.

The equivariant cohomology 
ring $H_{\mathbf{T}}^*(\bullet)$ is generated by
the Chern classes $s_1$, $s_2$, and $s_3$
of the standard representation of the three $\com^*$-factors.
At the $\mathbf{T}$-fixed point of $N$ over $0\in \PP^1$ the tangent
weights are specified as follows
\begin{enumerate}
\item[(i)] tangent weights of 
$-s_1$ and $-s_2$ along the fiber directions for the action of $T$,
\item[(ii)] tangent weight $-s_3$ along
$\PP^1$ for the action on $\com^*$.
\end{enumerate}
For the
$\mathbf{T}$-fixed point of $N$ over $\infty\in \PP^1$, the weights
are $-s_1,-s_2,s_3$.
We define 
\begin{equation}\label{pppw}
{\mathsf Z}^{\mathsf{cap}}_{d,\eta} 
\left(   \prod_{j=1}^\ell \tau_{i_j}(\gamma_{j})
\right)^{\mathbf{T}}
 =\sum _{n\in \Z }q^{n}
\int _{[P_{n} (N/N_\infty,d)]^{vir}}
  \prod_{j=1}^\ell \tau_{i_j}(\gamma_{j})\ 
\cup \epsilon_\infty^*(\mathsf{C}_{\eta}),
\end{equation}
by $\mathbf{T}$-equivariant residues
%{\footnote{The $T$-equivariant
%series associated to the cap will be denoted  
%$${\mathsf Z}^{\mathsf{cap}}_{d,\eta} 
%\left(   \prod_{j=1}^k \tau_{i_j}(\gamma_{j})
%\right)^T \ ,
%$$
%for $\gamma_j\in H^*(\Pp,\mathbb{Z})$.}
where $\gamma_j \in H^*_{\mathbf{T}}(\PP^1,\mathbb{Z})$.

\subsection{Reduction for the cap} 
\label{ddff}

Consider the following partition function for the cap  
\begin{equation}\label{keer}
\ZZ^{\mathsf{cap}}_{d,\eta}
\left(\tau_k([0]) \cdot \prod_{j=1}^{\ell}\tau_{i_j}(\gamma_j)
\right)^{\mathbf{T}}\ ,
\end{equation}
where $\gamma_j \in H_{\mathbf{T}}^*(\Pp,\mathbb{Z})$.

The $\mathbf{T}$-equivariant localization formula for \eqref{keer}
has two sides. The contribution over $0\in \Pp$ yields the 
descendent vertex $\mathsf{W}_\mu^{\mathsf{Vert}}$ of Section 2.6 of
\cite{partone}. We will follow here exactly the terminology of the
$\mathbf{T}$-fixed
point analysis of Sections 2.1-2.7 of \cite{partone}.
The contribution over $\infty\in \Pp$
yields rubber integrals discussed in Section 3.3
of \cite{partone}. 
While only the descendent vertex is required
for the proof of Theorem 1, the rubber theory
plays an essential role in the proof of Theorem 3. 

Let $Q_U$ determine a $\T$-fixed point
of the moduli space of stable pairs on the
affine chart associated to $0\in \PP^1$.
For each $x_1^ax_2^b \in \mu[x_1,x_2]$, let
$c_{a,b}$ be the {\em largest} integer
satisfying
$$x_1^ax_2^bx_3^{-c_{a,b}} \in Q_U\ .$$
The length of $Q_U$ is the sum of the $c_{a,b}$,
$$\ell(Q_U) = \sum_{(a,b)\in \mu} c_{a,b}\ .$$
The Laurent polynomial
\begin{equation}\label{kk999}
\FFF_{U} = \frac{1}{1-t_3}\sum_{(a,b)\in \mu} t_1^at_2^b t_3^{-c_{a,b}}
\end{equation}
plays a basic role.

In the formula in Section 2.6 of \cite{partone} for the descendent vertex
$\mathsf{W}^{\mathsf{Vert}}_\mu(\tau_k([0]))$, the descendent{\footnote{Here,
the class $[0]$ is the pull-back to $N$ of the fixed point $0\in \mathbb{P}^1$.}}
$\tau_k([0])$ enters via
\begin{multline*}
\frac{1}{s_1s_2}\text{ch}_{2+k}\big(\FFF_{U}\cdot 
(1-t_1)(1-t_2)(1-t_3)\big) = \\
\hspace{-50pt} \frac{1}{s_1s_2}\text{ch}_{2+k}\left( (1-t_1)(1-t_2)   
\sum_{(a,b)\in \mu} t_1^at_2^b t_3^{-c_{a,b}}\right) = \\
\frac{1}{s_1s_2}
\text{Coeff}_{z^{2+k}}\left( (1-e^{zs_1})(1-e^{zs_2})
\sum_{(a,b)\in \mu} e^{z(as_1+bs_2-c_{a,b}s_3)} \right)\ .
\end{multline*}
The third line exhibits the action of 
the descendent on $Q_U$ as
a symmetric function of the $d=|\mu|$ variables
\begin{equation} \label{gtyy}
\{ \ as_1+bs_2-c_{a,b}s_3 \ | \ (a,b)\in \mu \ \}
\end{equation}
with coefficients in $\Q[s_1,s_2]$.

In fact, the descendent
$\tau_k([0])$ is a symmetric function of degree $k$
in the variables \eqref{gtyy}.
The symmetric function is inhomogeneous with 
degree $k$ part equal to
$\frac{\mathfrak{p}_k}{k!}$
where $\mathfrak{p}_k$
is the power sum. Since the ring of symmetric
functions in $d$ variables is generated by
$\mathfrak{p}_1,\ldots, \mathfrak{p}_d$, 
we obtain
{\em universal reduction rules}.

Let $\mathfrak{t}_k$ be the symmetric function in $d$ variables
with coefficients in $\Q[s_1,s_2]$
defined by
$$\sum_{k=0}^\infty \mathfrak{t}_k z^{k+2}
= \frac{1}{s_1s_2}
(1-e^{zs_1})(1-e^{zs_2})
\sum_{n=0}^\infty {\mathfrak{p}_n} \frac{z^n}{n!}\ .$$
For $k>d$,
there are unique polynomials $f_{k,d}$  with coefficients
in $\Q(s_1,s_2)$ satisfying
\begin{equation}\label{u55}
\mathfrak{t}_k = f_{k,d}(\mathfrak{t}_1,
\ldots, \mathfrak{t}_d)\ .
\end{equation}
We have proven the following result.

\begin{prop} \label{jjtt}
In the degree $d$ theory of the 
$\mathbf{T}$-equivariant cap,
 the reduction rule
$$\tau_{k}([0]) \mapsto
f_{k,d}(\tau_1([0]), \ldots, \tau_d([0]))$$
holds universally when $k>d$.
\end{prop}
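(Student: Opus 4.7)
The plan is to combine $\mathbf{T}$-equivariant localization on the cap with the descendent vertex formula already recalled from \cite{partone}, together with a purely algebraic observation about generators of the ring of symmetric functions in $d$ variables. Since the class $[0] \in H^*_{\mathbf{T}}(\PP^1,\Z)$ is supported over $0\in \PP^1$, the descendent $\tau_k([0])$ affects only the vertex contribution at $0$ in the localization formula; the rubber integrals over $\infty \in \PP^1$ are unchanged. Thus it suffices to prove the reduction rule at the level of the descendent action on each $\mathbf{T}$-fixed point $Q_U$ over $0$.

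First I would use the formula for $\FFF_U$ in \eqref{kk999} and the computation already displayed in Section \ref{ddff}, which shows that the contribution of $\tau_k([0])$ to the descendent vertex $\mathsf{W}_\mu^{\mathsf{Vert}}$ at $Q_U$ equals $\mathfrak{t}_k$ evaluated at the $d$ variables $\{as_1+bs_2-c_{a,b}s_3 : (a,b)\in\mu\}$. Note that $|\mu|=d$ for every $\mathbf{T}$-fixed point in the degree $d$ cap theory, so the same number of variables arises uniformly.

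Next I would verify that $\mathfrak{t}_1,\ldots,\mathfrak{t}_d$ generate the ring of symmetric functions in $d$ variables over $\Q(s_1,s_2)$. Expanding
\[
\frac{1}{s_1 s_2}(1-e^{zs_1})(1-e^{zs_2}) = z^2 + O(z^3)
\]
and inserting into the generating series defining $\mathfrak{t}_k$ shows that $\mathfrak{t}_k = \mathfrak{p}_k/k! + (\text{polynomial in }\mathfrak{p}_0,\ldots,\mathfrak{p}_{k-1})$. The change-of-basis from $\{\mathfrak{p}_1,\ldots,\mathfrak{p}_d\}$ to $\{\mathfrak{t}_1,\ldots,\mathfrak{t}_d\}$ is therefore triangular with invertible diagonal entries $1/k!$, so the two sets generate the same subring. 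Since the power sums generate all symmetric functions in $d$ variables over any characteristic-zero field, the $\mathfrak{t}_j$ do as well. This yields, for each $k>d$, a unique polynomial $f_{k,d}\in \Q(s_1,s_2)[x_1,\ldots,x_d]$ satisfying the universal identity \eqref{u55}.

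Finally, because \eqref{u55} is an identity of symmetric functions in $d$ formal variables, it specializes to the $d$ variables $\{as_1+bs_2-c_{a,b}s_3\}$ attached to every $\mathbf{T}$-fixed point $Q_U$. Summing over fixed points in the localization formula, the partition function with $\tau_k([0])$ equals the partition function with $f_{k,d}(\tau_1([0]),\ldots,\tau_d([0]))$. The only real point to check is the uniformity $|\mu|=d$ across all fixed points, which is the main (and essentially cosmetic) obstacle; everything else is an explicit consequence of the descendent vertex formula and the algebraic fact that $\mathfrak{t}_k$ has leading term $\mathfrak{p}_k/k!$.
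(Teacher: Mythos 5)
Your argument is correct and follows essentially the same route as the paper: localize on the cap, identify the contribution of $\tau_k([0])$ at each fixed point $Q_U$ as the symmetric function $\mathfrak{t}_k$ in the $d$ variables $as_1+bs_2-c_{a,b}s_3$, and use that $\mathfrak{t}_1,\ldots,\mathfrak{t}_d$ generate the ring of symmetric functions in $d$ variables because the change of basis to power sums is triangular with leading term $\mathfrak{p}_k/k!$. The only addition you make is spelling out this triangularity explicitly, which the paper leaves implicit.
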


Explicitly, Proposition \ref{jjtt} yields the following equality for
$\mathbf{T}$-equivariant integrals:
\begin{multline*}
\ZZ^{\mathsf{cap}}_{d,\eta}
\left(\tau_k([0]) \cdot \prod_{j=1}^{\ell}\tau_{i_j}(\gamma_j)
\right)^{\mathbf{T}} = 
\ZZ^{\mathsf{cap}}_{d,\eta}
\left(
f_{k,d}(\tau_1(\mathsf{p}), \ldots, \tau_d(\mathsf{p}))
 \cdot \prod_{j=1}^{\ell}\tau_{i_j}(\gamma_j)
\right)^{\mathbf{T}}   \end{multline*}
for $k>d$.
Of course, Proposition \ref{jjtt} implies the
same result for the $T$-equivariant theory of the cap.

\subsection{Proof of Theorem \ref{jjjttt2}}
Consider the partition function for the relative
geometry $N/S$ over a curve $C$,  
$$\ZZ^{N/S}_{d,\eta^1, \ldots, \eta^r}
\left(\tau_k(\mathsf{p}) \cdot \prod_{j=1}^{\ell}\tau_{i_j}(\gamma_j)
\right)^{T}, \ \ \ \ \ \gamma_j \in H^*(C,\Z)\ .$$
Since the insertion $\tau_k(\mathsf{p})$ may be degenerated
to lie on a cap, Proposition \ref{jjtt} implies 
Theorem \ref{jjjttt2}. \qed

\subsection{Parity considerations}
We will need the following property of the reduction polynomials $f_{k,d}$ to obtain the functional equation of Theorem~\ref{ghht4}.

\begin{lem}\label{parity}
For every $k>d>0$, the reduction polynomial 
$$f_{k,d}\in\Q(s_1,s_2)[x_1,\ldots,x_d]$$
lies in the span of the monomials of the form
$x_1^{\sigma_1}\cdots x_d^{\sigma_d}$ where
$$\sum_{i=1}^d i\sigma_i \equiv k \mod 2\ .$$
%\[
%f_{k,d} = \sum_{|\sigma| \equiv k (\text{mod }2)}C_{\sigma}(s_1,s_2)T^{\sigma}.
%\]
\end{lem}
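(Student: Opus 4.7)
The plan is to exhibit a $\Z/2$ symmetry that acts on $\mathfrak{t}_i$ by the sign $(-1)^i$, and then to use algebraic independence of $\mathfrak{t}_1,\ldots,\mathfrak{t}_d$ to translate this into the required parity constraint on $f_{k,d}$.

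First, I would rewrite the defining generating function in a manifestly even form. Let $A(z) = (1-e^{zs_1})(1-e^{zs_2})/(s_1 s_2)$. Multiplying by $e^{-z(s_1+s_2)/2}$ gives
$$\tilde{A}(z) := e^{-z(s_1+s_2)/2}A(z) = \frac{4\sinh(zs_1/2)\sinh(zs_2/2)}{s_1 s_2},$$
which is a manifestly even function of $z$. Let $y_1,\ldots,y_d$ denote the $d$ underlying variables, so that $\mathfrak{p}_n = \sum_i y_i^n$, and introduce shifted variables $\tilde{y}_i = y_i + (s_1+s_2)/2$ with corresponding power sums $\tilde{\mathfrak{p}}_n = \sum_i \tilde{y}_i^n$. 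The defining relation then reads
$$\sum_{k\geq 0}\mathfrak{t}_k\, z^{k+2} = A(z)\sum_i e^{zy_i} = \tilde{A}(z)\sum_i e^{z\tilde{y}_i} = \tilde{A}(z)\sum_{n\geq 0}\tilde{\mathfrak{p}}_n\frac{z^n}{n!}.$$
Since only even powers of $z$ occur in $\tilde{A}(z)$, matching coefficients shows that $\mathfrak{t}_k$, viewed as a polynomial in $\tilde{y}_1,\ldots,\tilde{y}_d$ with coefficients in $\Q[s_1,s_2]$, is a sum of monomials whose total $\tilde{y}$-degree has the same parity as $k$.

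Next, consider the involution $\iota$ defined by $\tilde{y}_i\mapsto -\tilde{y}_i$ (equivalently, $y_i \mapsto -y_i - (s_1+s_2)$), fixing $s_1$ and $s_2$. By the parity statement just established, $\iota(\mathfrak{t}_i) = (-1)^i\mathfrak{t}_i$. Writing $f_{k,d} = \sum_\sigma c_\sigma(s_1,s_2)\,x_1^{\sigma_1}\cdots x_d^{\sigma_d}$ and applying $\iota$ to both sides of $\mathfrak{t}_k = f_{k,d}(\mathfrak{t}_1,\ldots,\mathfrak{t}_d)$, then subtracting the untransformed identity, yields
$$\sum_\sigma c_\sigma(s_1,s_2)\bigl[(-1)^{\sum_i i\sigma_i} - (-1)^k\bigr]\mathfrak{t}_1^{\sigma_1}\cdots\mathfrak{t}_d^{\sigma_d} = 0.$$
Since the top $y$-homogeneous component of $\mathfrak{t}_i$ equals $\mathfrak{p}_i/i!$ and $\mathfrak{p}_1,\ldots,\mathfrak{p}_d$ are algebraically independent generators of the ring of symmetric polynomials in $d$ variables over $\Q$, the elements $\mathfrak{t}_1,\ldots,\mathfrak{t}_d$ are algebraically independent over $\Q(s_1,s_2)$. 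Each $c_\sigma$ with $\sum_i i\sigma_i\not\equiv k\pmod 2$ must therefore vanish, which is exactly the claim of the lemma.

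The only substantive calculation is the identification of $\tilde{A}(z)$ with $4\sinh(zs_1/2)\sinh(zs_2/2)/(s_1s_2)$ and the recognition that this is even in $z$; the remainder of the argument is formal, and the main point to be careful about is the algebraic independence of $\mathfrak{t}_1,\ldots,\mathfrak{t}_d$, which follows from the triangular relationship with the $\mathfrak{p}_i$.
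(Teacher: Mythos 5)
Your proof is correct and rests on the same key identity as the paper's: shifting the variables by $(s_1+s_2)/2$ so that the prefactor becomes the manifestly even function $4\sinh(zs_1/2)\sinh(zs_2/2)/(s_1s_2)$. The only difference is in the final bookkeeping: you extract the parity constraint via the involution $\tilde{y}_i\mapsto -\tilde{y}_i$ together with the algebraic independence of $\mathfrak{t}_1,\ldots,\mathfrak{t}_d$ (which you correctly justify by triangularity with the power sums), whereas the paper tracks the parity of the $(s_1,s_2)$-degree of the coefficients of $f_{k,d}$ using the homogeneity of the $\mathfrak{t}_i$; both routes are valid.
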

\begin{proof}
Using the homogeneity of $\mathfrak{t}_i$, we see
from \eqref{u55} 
the coefficient of $x_1^{\sigma_1}\cdots x_d^{\sigma_d}$
in $f_{k,d}$ is homogeneous as a rational function in $s_1$ and
$s_2$. Moreover the degree of the coefficient is congruent mod 2 to
$k-\sum_{i=1}^d i\sigma_i$. We need only show that these
degrees are all even.

 We write the descendent $\tau_k([0])$ as a symmetric function in the adjusted variables
\[
\{ \ as_1+bs_2-c_{a,b}s_3+\frac{s_1+s_2}{2} \ | \ (a,b)\in \mu \ \}.
\]
If we let $\mathfrak{p}'_k$ denote the $k$th power sum of these $d$ variables, then we have
\[
\sum_{k=0}^\infty \mathfrak{t}_k z^{k+2}
= \frac{1}{s_1s_2}
(e^{zs_1/2}-e^{-zs_1/2})(e^{zs_2/2}-e^{-zs_2/2})
\sum_{n=0}^\infty {\mathfrak{p}'_n} \frac{z^n}{n!}\ ,
\]
where $\mathfrak{t}_k$ is as in the proof of Proposition~\ref{jjtt}. Since
\[
(e^{zs_1/2}-e^{-zs_1/2})(e^{zs_2/2}-e^{-zs_2/2})
\]
is an even function of $s_1$ and $s_2$, 
the coefficients of the monomial of $f_{k,d}$ must have even degree.
\end{proof}

\section{Factorization and rationality} \label{bbb}

\subsection{Dependence upon the cap}
Consider the stationary series
\begin{equation}\label{jyq}
\ZZ_{d,\eta^1,\dots,\eta^r}
^{N/S}\left(   \prod_{j=1}^\ell \tau_{i_j}(\mathsf{p})
\right)^T \ .
\end{equation}
If $\ell=0$, then no descendents appear and the 
rationality  of the partition function \eqref{jyq}
has been proven in \cite{mpt,lcdt}. If $\ell>0$, 
each stationary descendent $\tau_i(\mathsf{p})$
can be degenerated to 
a distinct cap. Hence, the series \eqref{jyq}
is determined by:
\begin{enumerate}
\item[$\bullet$]
the stable pairs theory of local curves (without insertions), 
\item[$\bullet$] the 1-pointed caps
$\ZZ^{\mathsf{cap}}_{d,\eta}
\left(\tau_{k}(\mathsf{p})
\right)^T$. 
\end{enumerate}
In fact, we can do much better by using Theorem \ref{jjjttt2}.

\subsection{Factorization I} \label{r22}
If $k>d$, then we have
\begin{equation} \label{klw}
\ZZ^{\mathsf{cap}}_{d,\eta}
\Big(\tau_k(\mathsf{p})
\Big)^{{T}} = 
\ZZ^{\mathsf{cap}}_{d,\eta}
\Big(
f_{k,d}(\tau_1(\mathsf{p}), \ldots, \tau_d(\mathsf{p}))  \Big)^{{T}}   
\end{equation}
by Theorem \ref{jjjttt2}.
After expanding 
$f_{k,d}(\tau_1(\mathsf{p}), \ldots, \tau_d(\mathsf{p}))$
and degenerating each stationary descendent $\tau_i(\mathsf{p})$
to a distinct cap, we find the series \eqref{klw}
is determined by:
\begin{enumerate}
\item[$\bullet$]
the stable pairs theory of local curves (without insertions), 
\item[$\bullet$] the 1-pointed caps
$\ZZ^{\mathsf{cap}}_{d,\eta}
\left(\tau_{k\leq d}(\mathsf{p})
\right)^T$. 
\end{enumerate}

\subsection{Factorization II} \label{thth3}
We can further restrict the descendents $\tau_k(\mathsf{p})$
which occur on the caps by geometrically factoring the
parts of the relative condition $\eta$.

\begin{prop} \label{cprnnn}
The series 
$\ZZ^{\mathsf{cap}}_{d,\eta}
\left(\tau_{k\leq d}(\mathsf{p})
\right)^T$ are determined by
\begin{enumerate}
\item[$\bullet$]
the stable pairs theory of local curves (without insertions), 
\item[$\bullet$] the 1-pointed caps
$\ZZ^{\mathsf{cap}}_{c,(c)}
\left(\tau_{c}(\mathsf{p})
\right)^T$ for $1 \leq c \leq d$.
\end{enumerate}
\end{prop}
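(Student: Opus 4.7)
My plan is to combine two applications of the stable pairs degeneration formula with Theorem~\ref{jjjttt2}, inducting on the degree $d$.  First, I would break the base $\Pp$ of the cap at an intermediate point lying between $0$ (where $\tau_k(\mathsf{p})$ is supported) and $\infty$ (where the relative condition $\eta$ sits).  The degeneration formula yields
$$
\ZZ^{\mathsf{cap}}_{d,\eta}(\tau_k(\mathsf{p}))^{T}
\ =\ \sum_{\mu\vdash d}
\ZZ^{\mathsf{cap}}_{d,\mu}(\tau_k(\mathsf{p}))^{T}\cdot g^{\mu\mu^\vee}\cdot
\ZZ^{\mathsf{tube}}_{d,\mu^\vee,\eta}^{T},
$$
with the tube factors insertion-free and hence part of the stable pairs theory of local curves.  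This reduces the problem to computing $\ZZ^{\mathsf{cap}}_{d,\mu}(\tau_k(\mathsf{p}))^T$ for judiciously chosen intermediate partitions $\mu$ of $d$.

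For $\mu=(\mu_1,\ldots,\mu_\ell)$ with $\ell\geq 2$, I would factor the relative condition into its individual parts via a further degeneration of the 3-fold, geometrically realizing the Nakajima decomposition of $\CC_\mu$ into its single-part constituents $\CC_{(\mu_i)}$ on the Fock space $\bigoplus_n A_T^*(\text{Hilb}(\com^2,n))$.  In the $T$-fixed point analysis of Section~\ref{ddff}, the descendent $\tau_k([0])$ evaluates as a symmetric function summed over the boxes of $\mu$, which splits naturally into contributions from the $\ell$ columns indexed by the parts of $\mu$;  together with the product factorization of the virtual vertex and edge contributions along the $\ell$ independent legs, this gives
$$
\ZZ^{\mathsf{cap}}_{d,\mu}(\tau_k(\mathsf{p}))^{T}
\ =\ \sum_{i=1}^{\ell}
\ZZ^{\mathsf{cap}}_{\mu_i,(\mu_i)}(\tau_k(\mathsf{p}))^{T}
\cdot\prod_{j\neq i}\ZZ^{\mathsf{cap}}_{\mu_j,(\mu_j)}()^{T}
$$
modulo insertion-free contributions.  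On each summand I would then match the descendent degree to the cap degree:  if $k>\mu_i$, Theorem~\ref{jjjttt2} in degree $\mu_i$ rewrites $\tau_k$ as a polynomial in $\tau_1,\ldots,\tau_{\mu_i}$, producing multi-descendent caps re-expanded by the first step;  if $k=\mu_i$ the summand is one of the claimed base series;  if $k<\mu_i<d$ the inductive hypothesis on $d$ applies.  The remaining case $\mu=(d)$, in which no splitting is available, is handled by choosing in the first step an auxiliary partition $\mu'\vdash d$ of length at least $2$ and solving the resulting linear equation for $\ZZ^{\mathsf{cap}}_{d,(d)}(\tau_k(\mathsf{p}))^T$ using the non-degeneracy of the tube matrix from the localization analysis of~\cite{partone}.

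The main obstacle is the factorization in the second step.  Splitting a multi-part cap as a product of single-part caps requires either a careful localization argument in which the virtual vertex and edge contributions decouple cleanly over the $\ell$ legs of each $T$-fixed stable pair, or an explicit $T$-equivariant degeneration of the 3-fold realizing the Nakajima creation product.  Either approach must verify that no cross-terms between distinct parts $(\mu_i),(\mu_j)$ of the relative condition appear, beyond those absorbable into the insertion-free local curves theory.
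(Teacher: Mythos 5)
Your plan has two genuine gaps, and the central one is exactly the step you flag as ``the main obstacle'': the claimed factorization of a multi-part cap into single-part caps. In the cap geometry there is only one vertex leg (over $0\in\Pp$), indexed by a single partition of $d$; the $\ell$ parts of the relative condition $\CC_\mu$ at $\infty$ do not correspond to $\ell$ independent legs, and the $\mathbf{T}$-fixed points of $P_n(N/N_\infty,d)$ do not decompose as tuples of configurations attached to the parts $\mu_i$. The Hilbert scheme $\text{Hilb}(\com^2,d)$ carrying the boundary condition is not a product of the $\text{Hilb}(\com^2,\mu_i)$, so the proposed identity
$$\ZZ^{\mathsf{cap}}_{d,\mu}(\tau_k(\mathsf{p}))^{T}=\sum_{i=1}^{\ell} \ZZ^{\mathsf{cap}}_{\mu_i,(\mu_i)}(\tau_k(\mathsf{p}))^{T}\cdot\prod_{j\neq i}\ZZ^{\mathsf{cap}}_{\mu_j,(\mu_j)}()^{T}$$
has no geometric justification and is not true as stated. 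Your first degeneration step is also essentially vacuous: breaking the cap into a cap glued to a tube expresses $\ZZ^{\mathsf{cap}}_{d,\eta}$ in terms of the full set $\{\ZZ^{\mathsf{cap}}_{d,\mu}\}_{\mu\vdash d}$, which is the same collection you started with, and the ``linear equation'' you propose for the $\mu=(d)$ case cannot isolate $\ZZ^{\mathsf{cap}}_{d,(d)}(\tau_k(\mathsf{p}))^T$ without already knowing the other entries of the system. Finally, the case $\eta=(d)$ with $k<d$ --- which must be reduced to the base series $\ZZ^{\mathsf{cap}}_{c,(c)}(\tau_c(\mathsf{p}))^T$ but is not itself one of them --- is not addressed by any mechanism in your outline.

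The paper's actual device for ``factoring the parts of $\eta$'' is quite different. It embeds the problem into the compact geometry $\PP^2\times\PP^1/\PP^2_\infty$ in class $d\beta$, weights one part of $\eta$ by the fixed-point class $[\xi_0]\in H^*_T(\PP^2)$, and uses compactness together with a virtual dimension count to show the resulting global series has degree at most $0$ in $\Q[s_1,s_2]$ --- hence vanishes, except in the borderline cases ($k=d-1$ with $\eta=(d)$, and $k=d$ with $\ell(\eta)=2$), where divisibility of the $q^{n>d}$ coefficients by $s_1+s_2$ pins the series down to a monomial in $q$. The $T$-equivariant localization on the $\PP^2$ factor then distributes the parts of $\widetilde{\eta}$ among the three fixed points $\xi_0,\xi_1,\xi_2$; the term with all parts at $\xi_0$ is, up to an $s_1s_2$ factor, the desired cap series, and every other term involves strictly lower degree and is known by induction on $d$. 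To salvage your approach you would need to replace the factorization claim with some such auxiliary compact geometry (or another rigorous splitting of the relative condition); as written, the argument does not close.
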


\begin{proof}
We proceed by induction on $d$.
If $d=1$, there is nothing to prove.
Assume Proposition \ref{cprnnn} holds for all degrees
less than $d$ and consider
$$\ZZ^{\mathsf{cap}}_{d,\eta}
\left(\tau_{k}(\mathsf{p})
\right)^T\ .$$
There are two main cases.

\vspace{10pt}
\noindent
Case $k<d$.
\vspace{10pt}

We consider the geometry of $\PP^2 \times \PP^1$
relative to the fiber 
$$\PP^2_\infty = \PP^2 \times \{ \infty \}\subset
\PP^2 \times \PP^1\ .$$
Let 
$\beta\in H_2(\PP^2\times \PP^1, \mathbb{Z})$
be the class of the section $\PP^1$ 
contracted over $\PP^2$.
The 2-dimensional torus $T$ acts on $\PP^2$
with fixed points $\xi_0,\xi_1,\xi_2\in \PP^2$.
The tangent weights can be chosen as follows:
$$-s_1,-s_2 \ \text{ for } \xi_0,\ \ 
s_1,s_1-s_2 \ \text{ for } \xi_1,\ \ 
s_2-s_1,s_2 \ \text{ for } \xi_2\ .$$
Let the partition $\eta$ have parts $\eta_1, \ldots, \eta_\ell$.
Let $\widetilde{\eta}$
be the cohomology weighted partition with
$\eta_1$ of weight $[\xi_0]\in H^*(\PP^2,\mathbb{Z})$ and
all of the other  parts assigned  weight 
$1\in H^*_T(\PP^2,\mathbb{Z})$.
The series
\begin{equation}\label{kedd}
\ZZ^{\PP^2\times\PP^1/\PP^2_\infty}_{d\beta,
\widetilde{\eta}}
\left(\tau_{k}([0])\right)^T  \in \Q[s_1,s_2][[q]]
\end{equation}
is well-defined.

The
virtual dimension of the moduli space
$P_n(\PP^2 \times \PP^1/\PP^2_\infty, d\beta)$
after the imposition of the boundary condition
$\widetilde{\eta}$ 
 is
$$2d - 2 - \sum_{i=1}^\ell (\eta_i-1) =
d+\ell -2 \geq d-1\ . $$
The dimension of the integrand $\tau_k([0])$
is $k< d$.
Hence, the integrals
$$\left\langle \tau_{k}([0]) 
\right\rangle_{\!n,d\beta}^{\! \PP^2\times \PP^1,\widetilde{\eta}}
= \int _{[P_{n} (\PP^2\times \PP^1/\PP^2_\infty,d\beta)]^{vir}}
  \tau_{k}([0]) \cup
\epsilon^*(\CC_{\widetilde{\eta}})\ $$
arising as coefficients of  \eqref{kedd}
have degree at most $0$ in $\mathbb{Q}[s_1,s_2]$.
If  the degree is negative, 
then the series \eqref{kedd}  vanishes.

The degree of \eqref{kedd} is 0 only when
$k=d-1$ and $\eta=(d)$.
The moduli space then lies entirely
in $$\com^2 \times \PP^1 \subset
\PP^2 \times \PP^1\ $$
 where $\com^2\subset \PP^2$
is the $T$-invariant affine containing
$\xi_0$ (corresponding to the cohomology  weight $[\xi_0]$
on the part $d$).
By the basic divisibility results of \cite{mpt,lcdt},
the linear factor $s_1+s_2$ must divide the $q^n$
coefficient of \eqref{kedd} for all
$n>d$. Since the invariant
is of degree 0, the divisibility by $s_1+s_2$
is impossible unless all such
coefficients vanish.
Since the leading term of \eqref{kedd} is $q^d$,
we conclude \eqref{kedd} is a monomial in $q$.

If $k<d$, we have calculated the series \eqref{kedd}.
Direct
calculation of \eqref{kedd} by $T$-equivariant
localization yields
a single term equal to
\begin{equation}\label{medd}
\ZZ^{\mathsf{cap}}_{d,\eta}
\left(\tau_{k}(\mathsf{p})
\right)^T
\end{equation}
up to an $s_1s_2$ factor.
The $T$-equivariant localization formula
for the relative geometry $\PP^2\times\PP^1/\PP^2_\infty$
in the class $d\beta$ distributes the parts of
$\widetilde{\eta}$ among the $T$-fixed points 
$$\xi_0,\xi_1,\xi_2 \in \PP^2 \ .$$
The term equal to \eqref{medd} arises when all parts are
distributed to $\xi_0$.
Since the first part of $\widetilde{\eta}$ must be
distributed to $\xi_0$,
the remaining terms are known by the induction
hypothesis. 
Hence, we have calculated \eqref{medd}.

\vspace{10pt}
\noindent
{\em Case} $\ell>1$.
\vspace{10pt}

The dimension estimates as above show
the series
\begin{equation}\label{keddd}
\ZZ^{\PP^2\times\PP^1/\PP^2_\infty}_{d\beta,
\widetilde{\eta}}
\left(\tau_{k}([0])\right)^T  \in \Q[s_1,s_2][[q]]
\end{equation}
is degree at most 0 in $s_1$ and $s_2$.
The series \eqref{keddd} must vanish in the negative
degree case.

The degree of \eqref{keddd} is 0 only when
$k=d$ and $\eta=(d_1,d_2)$.
In the degree 0 case, the invariant
\eqref{keddd} is independent of $s_1$ and
$s_2$, so we may calculate \eqref{keddd}
in the specialization $s_1+s_2=0$.
In the $T$-equivariant localization
of \eqref{keddd}, the
terms at $\xi_0$ all have vanishing
 coefficients
of $q^{n>d}$ 
in the specialization $s_1+s_2=0$.
The terms away from $\xi_0$ are
known inductively. Hence, \eqref{keddd}
is determined.

If $\ell>1$, we have calculated the series \eqref{keddd}.
As before, 
the $T$-equivariant localization
formula for  \eqref{keddd}
 yields
a single term equal to
$\ZZ^{\mathsf{cap}}_{d,\eta}
\left(\tau_{k}(\mathsf{p})
\right)^T$
up to an $s_1s_2$ factor.
The remaining terms are known by the induction
hypothesis. We have calculated 
$\ZZ^{\mathsf{cap}}_{d,\eta}
\left(\tau_{k}(\mathsf{p})
\right)^T$.

\vspace{+10pt}
The only possibility not covered by the two above cases
is 
 the 1-pointed cap
\begin{equation}\label{fgh}
\ZZ^{\mathsf{cap}}_{d,{(d)}}
\left(\tau_{d}(\mathsf{p})
\right)^T \in \Q[s_1,s_2][[q]] \ .
\end{equation}
The factorization methods do not inductively
determine \eqref{fgh}. 
\end{proof}

\subsection{Proof of Theorem \ref{ghht4}}
The methods of Sections \ref{r22}-\ref{thth3} 
provide an effective algorithm
for calculating an arbitrary degree $d$ 
stationary series \eqref{jyq} in terms of
\begin{enumerate}
\item[$\bullet$]
the stable pairs theory of local curves (without insertions), 
\item[$\bullet$] the 1-pointed caps
$\ZZ^{\mathsf{cap}}_{c,(c)}
\left(\tau_{c}(\mathsf{p})
\right)^T$ for $1 \leq c \leq d$.
\end{enumerate}
The partition functions of the stable pairs theory
of local curves (without insertions) are rational and
satisfy the functional equation of Theorem~\ref{ghht4},
see Theorems 2 and 3 of \cite{lcdt}.
The steps in the effective algorithm preserve the functional
equation. 
For the Factorization I step, Lemma~\ref{parity} is 
needed to ensure that the total weight of the descendent 
insertions does not change parity.
Theorem \ref{ghht4} then follows
from Theorem \ref{ytytyt} proven in Section \ref{ccc} below 
together with the observation that the rational functions 
appearing there satisfy the functional equation.

\qed

\section{Localization formalism}
\label{locrev}
\subsection{Formula}
The $\mathbf{T}$-equivariant localization formula  
for the capped 1-leg descendent
vertex is the following:
\begin{equation}\label{fred}
{\mathsf Z}^{\mathsf{cap}}_{d,\eta} 
\left(  \prod_{i=1}^k\tau_{i_j}([0])
\right)^{\mathbf{T}} = \sum_{|\mu|=d}
\bW_\mu^{\mathsf{Vert}} \left(\prod_{j=1}^k\tau_{i_j}([0])      \right) \cdot
{\bW_\mu^{(0,0)}} \cdot \mathsf{S}^{\mu}_{\eta}\ .
\end{equation}
The result is a consequence of \cite{GraberP} applied
to stable pairs theory of the cap \cite{pt2} ---
see Section 3.4 of \cite{partone}.
The form is the same as the Donaldson-Thomas localization formulas
used in \cite{moop,lcdt}.

The right side of 
localization formula is expressed in term of
three parts of different geometric origins:
\begin{enumerate}
\item[$\bullet$] the vertex term  $\bW_\mu^{\mathsf{Vert}} \left(
\prod_{j=1}^k
\tau_{i_j}([0])      \right)$   over $0\in \PP^1$,
\item[$\bullet$] the edge term   $\bW_\mu^{(0,0)}$,
\item[$\bullet$] the rubber integrals 
$\mathsf{S}^{\mu}_{\eta}$
over $\infty \in \PP^1$.
\end{enumerate}
The vertex term has been explained 
(for $i=1$)
already in Section \ref{ddff}. The edge term
$\bW^{(0,0)}_\mu$ is simply the inverse product
of the tangent weights of the Hilbert scheme of points of $\C^2$
at the $T$-fixed point corresponding to the partition $\mu$.
We review the rubber integrals here.

\subsection{Rubber theory}
The stable pairs theory of {\em rubber}{\footnote{We
follow the terminology and conventions of the
parallel rubber discussion for the local Donaldson-Thomas
theory of curves treated in \cite{lcdt}.}} naturally arises at the
boundary of $P_n(N/N_\infty,d)$.
Let $R$ be a rank 2 bundle of level $(0,0)$ over $\Pp$. Let 
 $$R_0, R_\infty\subset R$$ 
denote the fibers over $0, \infty\in \Pp$.
The 1-dimensional torus $\C^*$ acts on $R$ via the symmetries of
$\Pp$. 
Let $P_n(R/R_0\cup R_\infty,d)$ be the relative moduli space
of ideal sheaves, and let
$$P_n(R/R_0 \cup R_\infty,d)^\circ \subset P_n(R/R_0\cup R_\infty,d)$$
denote the open set with finite stabilizers for the $\C^*$-action
and {\em no} destabilization over $\infty\in \Pp$.
The rubber moduli space,
$${P_n(R/R_0\cup R_\infty,d)}^\sim  
= P_n(R/R_0 \cup R_\infty,d)^\circ/\C^*,$$
denoted by a superscripted tilde,
is determined by the (stack) quotient. The moduli space is 
empty unless $n>d$.
The rubber theory of $R$ is defined by integration against the
rubber virtual class,
 $$[{P_n(R/R_0\cup R_\infty,d)}^\sim ]^{vir}.$$ 
All of the above rubber constructions are $T$-equivariant for the
scaling action on the fibers of $R$ with weights $s_1$ and $s_2$.

The rubber moduli space $P_n(R/R_0\cup R_\infty, d)^\sim$ carries
a cotangent line at the dynamical point $0 \in \Pp$. Let
$$\psi_0 \in A^1_T({P_n(R/R_0\cup R_\infty,d)}^\sim, {\mathbb Q})$$
denote the associated cotangent line class.
Let $$\mathsf{P}_\mu \in A^{2d}_T(\text{Hilb}(\C^2,d),\mathbb{Z})$$
be the class corresponding to the $T$-fixed point determined
by the monomial ideal $\mu[x_1,x_2]\subset \C[x_1,x_2]$.

In the localization formula for the cap, special
rubber integrals with relative conditions $\mathsf{P}_\mu$ over $0$ and $\CC_\eta$
(in the Nakajima basis) over $\infty$ arise. Let
\begin{equation*} 
\mathsf{S}^\mu_\eta =   
%e(\text{Tan}_\mu)\cdot  
\sum_{n\geq d} q^{n}
\left\langle \mathsf{P}_\mu \ \left| \ \frac{1}{s_3-\psi_0}  \ \right|\ \CC_\eta 
\right\rangle_{n,d}^{
\sim}\ \in \Q(s_1,s_2,s_3)((q)) \ .
\end{equation*}
%where $e(\text{Tan}_\mu)$ is the $T$-equivariant Euler
%class of the tangent space of $\mu[x_1,x_2]$ in 
%$\text{Hilb}(\C^2,d)$. 
The bracket on the right is the rubber
integral defined by $T$-equivariant
residues. If $n=d$, the rubber moduli space in undefined ---
the bracket is then taken to be the $T$-equivariant intersection pairing
between the classes $\mathsf{P}_\mu$ and $\CC_\eta$ in 
$\text{Hilb}(\C^2,d)$.

\section{Calculation of $\mathsf{Z}^{\mathsf{cap}}_{d,(d)}( \tau_d(\mathsf{p}))^{\T}$}
\label{ccc}

\subsection{Dimension}

The notation $(d[0])$ will be used to assign the 
weight $[0]\in A^*_T(\C^2,{\mathbb Q} )$ to the part $d$.
Since
$$[0]= s_1s_2 \in A^*_T(\C^2, {\mathbb Q} ),$$
we see{\footnote{We will consider descendents here
equivariant with respect to the 3-torus $\T$ of Section \ref{aaa}.}}
$$\ZZ^{\mathsf{cap}}_{d,(d)}
\left(\tau_{d}(\mathsf{p})
\right)^\T = \left( \frac{1}{s_1s_2} \right)
\ZZ^{\mathsf{cap}}_{d,(d[0])}
\left(\tau_{d}(\mathsf{p})
\right)^\T$$
After imposing the boundary condition
$(d[0])$, the moduli space 
$$P_n(\PP^2\times \PP^1/\PP^2_\infty,d\beta)$$
is compact of virtual dimension $d-1$.

The moduli space $P_n(\PP^2\times \PP^1/\PP^2_\infty,d\beta)$
is empty for $n<d$ and isomorphic to $\text{Hilb}(\C^2,d)$
for $n=d$. Hence, the leading term of the series
$\ZZ^{\mathsf{cap}}_{d,(d[0])}
\left(\tau_{d}(\mathsf{p})
\right)^\T$
is the classical pairing
\begin{equation}\label{ktt}
q^d \Big\langle \tau_d, \mathsf{C}_{(d[0])} \Big \rangle_{\text{Hilb}(\com^2,d)} \in
\mathbb{Q}[s_1,s_2]\ . 
\end{equation}

The class $\tau_d$ is defined as follows.
Let
${\mathbb{F}}_0$ be the universal quotient sheaf on 
$\text{Hilb}(\C^2,d)\times \C^2$. Then,
\begin{equation}\label{mrr}
\tau_d=\pi_*\Big( {\text{ch}}_{2+d}({\mathbb{F}}_0)\Big) 
\in A^d_T({\text{Hilb}}(\C^2,d))
\end{equation}
where $\pi$ is the projection
$$\pi: \text{Hilb}(\C^2,d)\times \C^2 \rightarrow
\text{Hilb}(\C^2,d)\ .
$$

\begin{lem} \label{pwpw}
$
\ZZ^{\mathsf{cap}}_{d,{(d[0])}}
\left(\tau_{d}(\mathsf{p})
\right)^\T= (s_1+s_2)\cdot q^d F(d)(q)\ 
$
for   $F(d) \in \Q[[q]]$.
\end{lem}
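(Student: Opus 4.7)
The plan is to extract the claimed structural form from a dimension count combined with the natural $\mathbb{Z}/2$ symmetry of the cap, all carried out on the compactification $\PP^2\times\PP^1/\PP^2_\infty$ where the relevant moduli space is proper.

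Three ingredients combine to force the shape. First, the paragraph just above the lemma records that the moduli space with boundary condition $(d[0])$ is empty for $n<d$ and isomorphic to $\mathrm{Hilb}(\C^2,d)$ for $n=d$; this pins down the $q^d$ prefactor and identifies the leading coefficient with the classical $T$-equivariant pairing $\langle \tau_d, \CC_{(d[0])}\rangle_{\mathrm{Hilb}(\C^2,d)} \in \Q[s_1,s_2]$. Second, a dimension count gives that each $q^n$ coefficient is homogeneous of degree one in the equivariant parameters: the virtual cycle cut by $\epsilon_\infty^*(\CC_{(d[0])})$ has complex dimension $d-1$, whereas $\tau_d(\mathsf{p})$ has complex degree $d$, leaving a single degree in $H^*_{\T}(\bullet)$. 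Third, the involution of $N=\cO\oplus\cO$ exchanging the two summands is $T$-equivariant up to the swap $s_1\leftrightarrow s_2$, preserves the descendent $\tau_d(\mathsf{p})$, and fixes the boundary weight $[0]=s_1s_2$; thus each coefficient is symmetric in $s_1$ and $s_2$.

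To rule out $s_3$ I would work directly on the proper moduli $P_n(\PP^2\times\PP^1/\PP^2_\infty,d\beta)$: the class $\epsilon_\infty^*(\CC_{(d[0])})$ is pulled back from the $T$-equivariant ring of $\mathrm{Hilb}(\C^2,d)$, and $\tau_d(\mathsf{p})$ uses a non-$\C^*$-equivariant lift of the point class on $\PP^1$, so the $\T$-equivariant integral reduces to a $T$-equivariant one and lies in $\Q[s_1,s_2]$. Assembling the three observations, each coefficient is a symmetric homogeneous linear polynomial in $s_1,s_2$, hence a rational multiple of $s_1+s_2$, giving the factorisation $(s_1+s_2)\,q^d F(d)(q)$ with $F(d)\in\Q[[q]]$. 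The main subtlety in this plan is the $s_3$-independence — making precise that the cap invariant with the weight $(d[0])$ genuinely computes a $T$-equivariant integral on the compactification rather than a $\T$-equivariant residue.
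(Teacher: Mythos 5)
Your first two ingredients are exactly the ones the paper uses for the leading coefficient: compactness of $P_n(\PP^2\times\PP^1/\PP^2_\infty,d\beta)$ after imposing $(d[0])$ gives polynomiality, the dimension count $d-(d-1)=1$ gives homogeneous linearity of each $q^n$ coefficient, and the $s_1\leftrightarrow s_2$ swap gives symmetry. The gap is precisely where you flag it: the $s_3$-independence. The series in the lemma is the $\T$-equivariant one (the footnote in Section 4.1 and the use of $\tau_d([0])$ in the localization of Section 4.2 make this explicit), so the descendent insertion must be a $\T$-equivariant lift of the point class. There is no canonical such lift: $[0]$ and $[\infty]=[0]-s_3\cdot 1$ are both lifts, and $\tau_d([0])-\tau_d([\infty])=s_3\,\tau_d(1)$ with $\tau_d(1)$ of complex degree exactly $d-1$, i.e.\ complementary to the virtual dimension. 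So the ambiguity between lifts contributes $s_3$ times a possibly nonzero rational constant to each $q^n$ coefficient, and "the integral reduces to a $T$-equivariant one" only tells you the $s_3=0$ specialization lands in $\Q[s_1,s_2]$ --- it does not kill a term $b_n s_3$ (which is also invariant under the $s_1\leftrightarrow s_2$ symmetry, since the swap fixes $s_3$). After your three steps you are left with coefficients of the form $a_n(s_1+s_2)+b_n s_3$ and no mechanism to force $b_n=0$.

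The paper closes this with an input you never invoke: the divisibility results of \cite{mpt,lcdt}, which assert that every $q^{n>d}$ coefficient is divisible by $s_1+s_2$ in $\Q[s_1,s_2,s_3]$. A homogeneous linear form divisible by $s_1+s_2$ is a rational multiple of $s_1+s_2$, which simultaneously eliminates $s_3$ and yields the claim (for $n>d$ one does not even need the symmetry). For $n=d$ the explicit identification of the coefficient with the classical pairing $\langle\tau_d,\CC_{(d[0])}\rangle$ on $\mathrm{Hilb}(\C^2,d)$, which manifestly has no $s_3$, plus linearity and symmetry, finishes the argument. To repair your proof you should replace the reduction-to-$T$ step by this divisibility input (or by an independent argument that $\int \tau_d(1)\cup\epsilon_\infty^*(\CC_{(d[0])})$ vanishes against the virtual class in each degree $n>d$, which is not obvious).
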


\begin{proof}
By compactness of the underlying moduli spaces of
pairs, we see the series
$
\ZZ^{\mathsf{cap}}_{d,{(d[0])}}
\left(\tau_{d}(\mathsf{p})
\right)^\T$ must lie in 
$\mathbb{Q}[s_1,s_2,s_3][[q]]
$.
The leading $q^d$ coefficient certainly has no
$s_3$ dependence by \eqref{ktt}. By dimension considerations, the
leading $q^d$ coefficient must be linear 
and thus, by symmetry, a multiple of $s_1+s_2$.
For the coefficient of $q^{n>d}$, divisibility by $s_1+s_2$
is obtained from \cite{mpt,lcdt}.
\end{proof}

\subsection{Localization}
We wish to compute the series
\[
F(d) = \frac{s_1s_2}{s_1+s_2}q^{-d}\ \mathsf{Z}^{\mathsf{cap}}_{d,(d)}(\tau_{d}([0]))^{\T} \in \Q[[q]]\subset\Q(s_1,s_2,s_3)[[q]]
\]
introduced in Lemma~\ref{pwpw}. Via the  
 localization formula \eqref{fred}, we have
\[
F(d) = \frac{s_1s_2}{s_1+s_2}q^{-d}\sum_{|\mu|=d}
\bW_\mu^{\mathsf{Vert}} (\tau_{d}([0])) \cdot  
\bW_\mu^{(0,0)} \cdot \mathsf{S}^{\mu}_{(d)}\ .
\]
We will separate the classical terms occuring on the right side.

By definition, the classical term of $\bW_\mu^{\mathsf{Vert}} (\tau_{d}([0])$
is the leading $q^d$ term. Let
$$\FFF_{\mu} = \sum_{(a,b)\in \mu} t_1^at_2^b.$$
We write the vertex as
$$\bW_\mu^{\mathsf{Vert}} (\tau_{d}([0])) = 
\frac{q^d}{s_1s_2}\ \ch_{d+2}\big(\FFF_\mu\cdot(1-t_1)(1-t_2)\big) 
+ \widehat{\bW}_\mu^{\mathsf{Vert}} (\tau_{d}([0]))\ $$
where $\widehat{\bW}_\mu^{\mathsf{Vert}} (\tau_{d}([0]))$
represents all the higher order terms in $q$.
Similarly, we write
$$\mathsf{S}^{\mu}_{(d)} = \langle \JJ_\mu,\CC_{(d)}\rangle + 
\widehat{\mathsf{S}}^{\mu}_{(d)} $$
where the leading term
 $\langle \JJ_\mu,\CC_{(d)}\rangle$
is the $T$-equivariant pairing on $\text{Hilb}(\mathbb{C}^2,d)$.

Using the above formulas with the leading classical terms, 
we rewrite the result of the localization formula as
\begin{eqnarray*}
F(d) &=& \ \ \ \sum_{|\mu|=d}\frac{1}{s_1+s_2}\ch_{d+2}(\FFF_\mu\cdot(1-t_1)(1-t_2))\cdot \bW_\mu^{(0,0)}\cdot \langle \JJ_\mu,\CC_{(d)}\rangle \\ \nonumber
&& + \sum_{|\mu|=d}\frac{s_1s_2}{s_1+s_2}q^{-d}\ \widehat{\bW}_\mu^{\mathsf{Vert}}
(\tau_{d}([0]))\cdot \bW_\mu^{(0,0)}\cdot
\langle \JJ_\mu,\CC_{(d)}\rangle \\ \nonumber
&&+ \sum_{|\mu|=d}\frac{1}{s_1+s_2}\ch_{d+2}(\FFF_\mu\cdot(1-t_1)(1-t_2))\cdot
\bW_\mu^{(0,0)}\cdot \widehat{\mathsf{S}}^{\mu}_{(d)} \\ \nonumber
&&+ \sum_{|\mu|=d}\frac{s_1s_2}{s_1+s_2}q^{-d}\
\widehat{\bW}_\mu^{\mathsf{Vert}} (\tau_{d}([0]))\cdot 
\bW_\mu^{(0,0)}\cdot
\widehat{\mathsf{S}}^{\mu}_{(d)}.
\end{eqnarray*}
The first line on the right is the classical pairing 
$$F_0(d) = \frac{1}{s_1+s_2}
\Big\langle \tau_{d}, \CC_{(d[0])}\Big\rangle\in\Q$$
which we will compute in Proposition \ref{classicald} below. 
We will compute the difference
 $$\widehat{F}(d) = F(d)-F_0(d)$$
 by evaluating each of the other three terms at $s_2=-s_1$, 
expanding as a Laurent series in $\frac{s_3}{s_1}$,
 and taking the constant term.

Both $\widehat{\bW}_\mu^{\mathsf{Vert}}$ and $\widehat{\mathsf{S}}^{\mu}_{(d)}$ 
are divisible by $s_1+s_2$. Therefore,
 the fourth term in  the formula for $F(d)$ vanishes after
 the substitution $s_2=-s_1$.
Only two terms,
\begin{multline*}
\widehat{F}(d) = \sum_{|\mu|=d}\left(\frac{s_1s_2}{s_1+s_2}q^{-d}\
\widehat{\bW}_\mu^{\mathsf{Vert}} (\tau_{d}([0]))\cdot \bW_\mu^{(0,0)}\cdot\langle \JJ_\mu,\CC_{(d)}\rangle\right)\Big|_{s_2=-s_1} \\
+ \sum_{|\mu|=d}\left(
\frac{1}{s_1+s_2}
\ch_{d+2}(\FFF_\mu\cdot(1-t_1)(1-t_2))\cdot \bW_\mu^{(0,0)}\cdot 
\widehat{\mathsf{S}}^{\mu}_{(d)}\right)\Big|_{s_2=-s_1},
\end{multline*}
remain.

We evaluate the two above
terms separately. The first requires detailed knowledge 
of the vertex factor $$\frac{s_1s_2}{s_1+s_2}q^{-d}\
\widehat{\bW}_\mu^{\mathsf{Vert}}(\tau_{d}([0]))\Big|_{s_2=-s_1}$$
and is evaluated in Section \ref{vcv}.
The second requires detailed knowledge of the rubber 
factor $$\frac{1}{s_1+s_2}\widehat{\mathsf{S}}^{\mu}_{(d)}\Big|_{s_2=-s_1}$$
and is evaluated in Section \ref{rcr}.

\subsection{Vertex calculation}\label{vcv}

We begin with the first term
\begin{equation}\label{gttgt}
\sum_{|\mu|=d}\left(\frac{s_1s_2}{s_1+s_2}q^{-d}\
\widehat{\bW}_\mu^{\mathsf{Vert}} (\tau_{d}([0]))\cdot \bW_\mu^{(0,0)}\cdot\langle \JJ_\mu,\CC_{(d)}\rangle\right)\Big|_{s_2=-s_1}
\end{equation}
of  $\widehat{F}(d)$.
The pairing $\langle \JJ_\mu,\CC_{(d)}\rangle$ has a simple expression mod $s_1+s_2$,
\begin{equation}\label{jc}
\Big\langle \JJ_\mu,\CC_{(d)}\Big\rangle\Big|_{s_2=-s_1} = 
\frac{(-1)^{d-1}(d-1)!}{ \dim\mu }\chi^\mu\big((d)\big)s_1^{d-1}.
\end{equation}
Here, $\dim\mu$ is the dimension of the irreducible representation of 
the symmetric group $\Sigma_d$ corresponding to the partition $\mu$,
 and $\chi^\mu$ is the associated character.
The proof of \eqref{jc} is obtained directly from 
the Jack polynomial expression for
the $T$-fixed points of $\text{Hilb}(\com^2,d)$, see Section 3.7 of
\cite{hilb1}.{\footnote{Our variable conventions here differ slightly from
\cite{hilb1}. Specifically, our $s_i$ correspond to $-t_i$ in \cite{hilb1}.}}

The character $\chi^\mu$  vanishes on a $d$-cycle unless $\mu$
is of the following simple form
$$\alpha_a = (a+1,1,\ldots,1)$$ 
for  $0\le a \le d-1$. We have
$$\chi^{\alpha_a}\big((d)\big)= (-1)^{d-1-a}\ .$$
We will restrict to the case 
$\mu=\alpha_a$ and replace the sum over $\mu$ with a sum over $a$. 
The dimension formula
$$\dim \alpha_a = \binom{d-1}{a}$$
holds. The constant
$$b=d-1-a$$
will occur often below.

The edge factor $\bW_\mu^{(0,0)}$ is also easy to compute after the
 evaluation  $s_2=-s_1$:
\begin{equation}\label{edge}
\bW_\mu^{(0,0)}\Big|_{s_2=-s_1} = \frac{(-1)^d(\dim\mu)^2}{(d!)^2}s_1^{-2d}.
\end{equation}
The dimension of $\mu$ here enters via the hook length formula.

The most complicated part of the calculation
is the vertex factor $\widehat{\bW}_\mu^{\mathsf{Vert}} 
\left(\tau_{d}([0])\right)$ for $\mu = \alpha_a$.
From Section 2.6 of \cite{partone},
\begin{multline*}
\widehat{\bW}_\mu^{\mathsf{Vert}} \left(\tau_{d}([0])\right) 
= \\ \sum_{Q_U:\ \ l(Q_U)>0}
\frac{q^{d+l(Q_U)}}{s_1s_2}\ch_{d+2}
\big(\bF_U\cdot(1-t_1)(1-t_2)(1-t_3)\big)\cdot e(-\bV_U),
\end{multline*}
where the sum runs over $\T$-fixed loci $Q_U$ of positive length. 
Recall, the 
the $\T$-fixed loci correspond to box configurations defined by height functions $c_{a,b}$ on the partition $\mu$ determining 
$\bF_U$ by formula \eqref{kk999}.
The term $\bV_U$ is expressed in terms of $\bF_U$ in Section 2.5 of 
\cite{partone}.

In the case $\mu = \alpha_a$, a straightforward
calculation shows the vertex weight $e(-\bV_U)$ is divisible by 
$(s_1+s_2)^2$ unless the box configuration is a cylinder (of height $h>0$) 
under a rim hook $\eta$ of $\mu$.\footnote{The
divisibility statement is actually true for any $\mu$.} 
We break the sum into terms by the size $r$ of $\eta$. 
When $r = d$, the only possibility for the rim hook is $\eta=\mu$. 
The corresponding vertex weight is 
\[
\frac{1}{s_1+s_2}e(-\bV_U)\Big|_{s_2=-s_1} 
= \frac{(-1)^{dh+1}}{hs_3}\left(1+\frac{hs_3}{s_1}\sum_{\substack{i=-b \\ i\ne 0}}^a \frac{1}{i} + 
%\OO\left(\left(\frac{s_3}{s_1}\right)^2\right)
\ldots \right).
\]
Here and below, the dots on the right stand for terms of order 2 and
higher in $\frac{s_3}{s_1}$.
For each $r < d$, there are at most two such rim hooks, 
depending on whether $a \ge r$ and whether $d-1-a \ge r$. 
For $a\geq r$, we find
\begin{multline*}
\frac{1}{s_1+s_2}e(-\bV_U)\Big|_{s_2=-s_1} = \\ \ \ \ \
\frac{(-1)^{rh+1}}{hs_3}\left(1+\frac{hs_3}{s_1}\left(\frac{1}{d}-\frac{1}{d-r}-\frac{1}{r}+\sum_{i=a-r+1}^a \frac{1}{i}\right)
 + 
%\OO\left(\left(\frac{s_3}{s_1}\right)^2\right)
\ldots
\right).
\end{multline*}
For $d-1-a \ge r$, the answer is obtained by symmetry by
interchanging $s_1$ and $s_2$.
The symmetry propagates through the entire calculation of \eqref{gttgt}.
After setting $s_2=-s_1$,
we will take the constant term of the Laurent
expansion in 
$$\frac{s_3}{s_1} = -\frac{s_3}{s_2}\ . $$
Hence, we can treat the symmetry as exact.

After putting all the terms together  and 
inserting the descendent factors, we obtain 
for
$$\frac{s_1s_2}{s_1+s_2}q^{-d}\bW_\mu^{\mathsf{Vert}+} (\tau_{d}([0]))
\Big|_{s_2=-s_1}$$
the following formula:
\footnotesize
\begin{multline*} \ \ \ \ \ \ \ \ 
\sum_{h=1}^{\infty}\frac{(-1)^{dh+1}q^{dh}}{hs_3}\left(1+\frac{hs_3}{s_1}
\sum_{\substack{i=-b \\ i\ne 0}}^a \frac{1}{i} + 
%\OO\left(\left(\frac{s_3}{s_1}\right)^2\right)
\ldots
\right) \\
  \cdot
\ch_{d+2}\left(-t_1^{a+1}t_3^{-h}+t_1^at_3^{-h}+t_1^{-b}t_3^{-h}-t_1^{-b-1}t_3^{-h}\right) \\{}
\\
+2\sum_{r=1}^{a}\sum_{h=1}^{\infty}\frac{(-1)^{rh+1}q^{rh}}{hs_3}\left(1+\frac{hs_3}{s_1}\left(\frac{1}{d}-\frac{1}{d-r}-\frac{1}{r}+\sum_{i=a-r+1}^a \frac{1}{i}\right) + 
%\OO\left(\left(\frac{s_3}{s_1}\right)^2\right)
\ldots \right) \\
\cdot \ch_{d+2}\left(-t_1^{a+1}t_3^{-h}+t_1^at_3^{-h}+t_1^{a-r+1}t_3^{-h}-t_1^{a-r}t_3^{-h}\right)
\end{multline*}
\normalsize
Here, we have included the symmetry discussed above.

After combining all of the parts
of \eqref{gttgt}, summing over $\mu = \alpha_a$, and taking 
the constant term when expanded as a Laurent series in $\frac{s_3}{s_1}$, we obtain an expression of the form
\[
\sum_{r=1}^d A_r\frac{(-q)^r}{1-(-q)^r}\ .
\]
The explicit formulas for $A_r$ depend upon two cases.
For $A_d$, we have
\footnotesize
\begin{multline*} 
%\label{casead}
{(-1)^{d-1}}{d\cdot d!(d+2)!}
\ A_d=\\
\sum_{a+b=d-1}(-1)^a\binom{d-1}{a}
\cdot
\sum_{\substack{i=-b \\ i\ne 0}}^a\frac{1}{i}\left(-(-b-1)^{d+2}+(-b)^{d+2}+a^{d+2}-(a+1)^{d+2}\right) \\
- {(d+2)}\sum_{a+b=d-1}(-1)^a\binom{d-1}{a}\left(-(-b-1)^{d+1}+(-b)^{d+1}+a^{d+1}-(a+1)^{d+1}\right),
\end{multline*}
\normalsize
and for $r<d$, we have
\footnotesize
\begin{multline*} 
%\label{caseanotd}
\frac{(-1)^{d-1}}{2} d\cdot d!(d+2)!\
A_r = \\ 
\sum_{\substack{a+b=d-1 \\ a\ge r}}(-1)^a\binom{d-1}{a}\left(\frac{1}{d}-\frac{1}{d-r}-\frac{1}{r}+\sum_{i=a-r+1}^a\frac{1}{i}\right) \\
\ \ \ \ \ \ \ \ \ \ \ \ \ \ \ \ \ \ \ \ \ \ \  \ \ \ \ \ \ \ \ \ \ \ \ \ \ 
\cdot\left(-(-b-1)^{d+2}+(-b)^{d+2}+a^{d+2}-(a+1)^{d+2}\right) \\
- (d+2)
\sum_{a=r}^{d-1}(-1)^a\binom{d-1}{a}\left(-(a-r)^{d+1}+(a-r+1)^{d+1}+a^{d+1}-(a+1)^{d+1}\right) .
\end{multline*}
\normalsize
While the above formulas for $A_d$ and $A_{r<d}$ look unpleasantly complicated,
a remarkable cancellation will occur in Section \ref{hhttyy}.

\subsection{Rubber calculation} \label{rcr}

Evaluating the second term 
\begin{equation}\label{hhthh}
\sum_{|\mu|=d}\left(
\frac{1}{s_1+s_2}
\ch_{d+2}(\FFF_\mu\cdot(1-t_1)(1-t_2))\cdot \bW_\mu^{(0,0)}\cdot 
\widehat{\mathsf{S}}^{\mu}_{(d)}\right)\Big|_{s_2=-s_1}
\end{equation}
 requires care in moving between two bases for the equivariant 
cohomology of the Hilbert scheme of $\C^2$ (which we identify with 
Fock space): the Nakajima basis $\{\CC_\lambda\}$ and the $T$-fixed point basis $\{\JJ_\lambda\}$. 
The change of basis formula is simple mod $s_1+s_2$:
\begin{equation} \label{changebasis} %doublecheck signs here
\JJ_\lambda = \sum_\mu\frac{(-1)^{\ell(\mu)}d!}{\dim\lambda}\chi^\lambda(\mu)s_1^{d+\ell(\mu)}\CC_\mu.
\end{equation}
Our $\widehat{\mathsf{S}}^{\mu}_{(d)}$ should be viewed as having upper index given in the $T$-fixed point basis but lower index in the Nakajima basis.

The main tool for evaluating $\mathsf{S}$
is the quantum differential equation of \cite{lcdt} valid
also for stable pairs \cite{mpt},
\begin{equation} \label{xxqq}
s_3 q\frac{d}{dq} {\mathsf S}
 = \MM {\mathsf S} - {\mathsf S} \MM(0) \ .
\end{equation}
Here, $\mathsf{S}$ has both components indexed by the Nakajima 
basis and is viewed as an operator on Fock space, see \cite{lcdt}.
The operator
$\MM$ is defined on Fock space by{\footnote{The operator $\MM$ was 
found earlier in the
quantum cohomology of the Hilbert scheme of points of $\com^2$ \cite{hilb1}.
A parallel occurance appears in the local Gromov-Witten theory
of curves \cite{BryanP}.}}
\begin{multline}
  \label{theM} 
\MM(q,s_1,s_2) = (s_1+s_2) \sum_{k>0} \frac{k}{2} \frac{(-q)^k+1}{(-q)^k-1} \,
 \alpha_{-k} \, \alpha_k  + \\
\frac12 \sum_{k,l>0} 
\Big[s_1 s_2 \, \alpha_{k+l} \, \alpha_{-k} \, \alpha_{-l} -
 \alpha_{-k-l}\,  \alpha_{k} \, \alpha_{l} \Big] \,.
\end{multline}
The $q$-dependence of $\MM$ is only in the first sum in \eqref{theM}.
The operator $\MM(0)$ is the $q^0$-coefficient of $\MM$.

From the differential equation \eqref{xxqq}, we find
\begin{equation*} 
s_3 q\frac{d}{dq} {\mathsf S}
 =  (s_1+s_2)\mathsf{A} + [\mathsf{B}, {\mathsf S}]  \ \ \ \mod (s_1+s_2)^2\ .
\end{equation*}
The first term is
$$\mathsf{A} = \left(\sum_{k>0} \frac{k}{2} \frac{(-q)^k+1}{(-q)^k-1} \,
 \alpha_{-k} \, \alpha_k \right) \circ \mathsf{S}(0) + 
\mathsf{S}(0) \circ
 \left(\sum_{k>0} \frac{k}{2}  \,
 \alpha_{-k} \, \alpha_k \right). $$
The operator in the second term is
$$\mathsf{B} = 
\frac12 \sum_{k,l>0} 
\Big[s_1 s_2 \, \alpha_{k+l} \, \alpha_{-k} \, \alpha_{-l} -
 \alpha_{-k-l}\,  \alpha_{k} \, \alpha_{l} \Big] \,. $$

Since we are interested  now in $\widehat{\mathsf{S}}$, we can simplify the
differential equation:
\begin{equation} \label{xxqq2}
s_3 q\frac{d}{dq} \widehat{\mathsf{S}}
 =  (s_1+s_2)\mathsf{\widehat{A}} + [\mathsf{B}, \widehat{\mathsf S}]  \ \ \ \mod (s_1+s_2)^2\ .
\end{equation}
for
$$\widehat{\mathsf{A}} = \left(\sum_{k>0} k \frac{(-q)^k}{(-q)^k-1} \,
 \alpha_{-k} \, \alpha_k \right) \circ \mathsf{S}(0).$$

The eigenvectors for ${\mathsf{B}}$ (mod $s_1+s_2$) are the classes
 $\JJ_\lambda$ with eigenvalues 
\[ %doublecheck sign
w_\lambda = \sum_{(i,j)\in\lambda}(i-j)s_1.
\]
Equation \eqref{xxqq2} then
gives a simple relationship between the entries of 
$\widehat{\mathsf{S}}$ and of $\widehat{\mathsf{A}}$ in the $\JJ_\lambda$ basis.

The
operator $\widehat{\mathsf{A}}$ is diagonal in the Nakajima basis $\CC_\lambda$
 with entries
\[
\widehat{\mathsf{A}}^{\CC}_{\lambda\lambda} = 
\sum_{k \text{ part of }\lambda}k^2\frac{(-q)^k}{(-q)^k-1}.
\]
Applying the change of basis formula \eqref{changebasis}, we obtain 
the entries in the $\JJ_\lambda$ basis (mod $s_1+s_2$):
\[ %change this z to a better z
\widehat{\mathsf{A}}^{\JJ}_{\mu'\mu} 
= \sum_{\lambda}\frac{\dim\mu'}{\dim\mu}
\frac{\chi^\mu(\lambda)\chi^{\mu'}(\lambda)}
{z(\lambda)}\sum_{k \text{ part of }\lambda}k^2\frac{(-q)^k}{(-q)^k-1}. 
\]
If we use the notation $\lambda_r$ for the number of parts in a partition $\lambda$ of size $r$, then we can rewrite the entries as:
\[ 
\widehat{\mathsf{A}}^{\JJ}_{\mu'\mu} = \frac{\dim\mu'}{\dim\mu}\sum_{r=1}^dr\frac{(-q)^r}{(-q)^r-1}\sum_{\lambda}\frac{\chi^\mu(\lambda)\chi^{\mu'}(\lambda)r\lambda_r}{z(\lambda)}. 
\]
The following Lemma (easily proven using the Murnaghan-Nakayama rule) gives a simpler expression for the innermost sum in the above expression.
\begin{lem}\label{varpi}
Let $\mu,\mu'$ be partitions of the same size and let $r>0$. Then
\[
\sum_{\lambda}\frac{\chi^\mu(\lambda)\chi^{\mu'}(\lambda)r\lambda_r}{z(\lambda)} = \sum_{\substack{\gamma,\gamma'\text{ $r$-hooks} \\ \mu\backslash\gamma = \mu'\backslash\gamma'}}(-1)^{h(\gamma)+h(\gamma')}
\]
where $h(\gamma)$ is the number of rows in a rim hook $\gamma$.
\end{lem}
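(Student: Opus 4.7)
The plan is to reduce the left-hand side to an instance of the standard orthogonality $\sum_{\nu}\chi^\alpha(\nu)\chi^\beta(\nu)/z(\nu)=\delta_{\alpha\beta}$ after applying Murnaghan--Nakayama to both characters in turn.

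The first step is a simple combinatorial identity for centralizer orders. If $\lambda$ has $m_r=\lambda_r$ parts equal to $r$, then $z(\lambda)=r\,m_r\,z(\lambda\setminus r)$, so
\[
\frac{r\lambda_r}{z(\lambda)}=\frac{1}{z(\lambda\setminus r)}\ ,
\]
and the sum vanishes on partitions with no part equal to $r$. Reindexing via $\nu=\lambda\setminus r$ rewrites the left-hand side as
\[
\sum_{\nu\,\vdash\,d-r}\frac{\chi^{\mu}(\nu\cup r)\,\chi^{\mu'}(\nu\cup r)}{z(\nu)}\ .
\]

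The second step is to apply the Murnaghan--Nakayama rule separately to $\chi^\mu(\nu\cup r)$ and $\chi^{\mu'}(\nu\cup r)$, stripping off the distinguished part $r$. With the paper's convention that $h(\gamma)$ is the number of rows of the rim hook, the Murnaghan--Nakayama sign is $(-1)^{h(\gamma)-1}$, so
\[
\chi^{\mu}(\nu\cup r)=\sum_{\gamma}(-1)^{h(\gamma)-1}\chi^{\mu\setminus\gamma}(\nu)\ ,
\]
with $\gamma$ ranging over $r$-rim hooks of $\mu$, and similarly for $\mu'$. Substituting both expansions and interchanging the sums gives
\[
\sum_{\gamma,\gamma'}(-1)^{h(\gamma)+h(\gamma')-2}\sum_{\nu}\frac{\chi^{\mu\setminus\gamma}(\nu)\,\chi^{\mu'\setminus\gamma'}(\nu)}{z(\nu)}\ .
\]
The pair $(-1)$'s from the two Murnaghan--Nakayama applications cancel, which is precisely why the right-hand side of the lemma has the sign $(-1)^{h(\gamma)+h(\gamma')}$ (rather than $(-1)^{h(\gamma)+h(\gamma')-2}$ or similar).

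The final step is character orthogonality: the inner $\nu$-sum equals $\delta_{\mu\setminus\gamma,\,\mu'\setminus\gamma'}$, collapsing the outer sum to exactly the right-hand side of the lemma. There is no real obstacle here; the only point requiring a moment of care is the sign-bookkeeping in the Murnaghan--Nakayama rule under the paper's non-standard choice $h(\gamma)=\#\text{rows}(\gamma)$ rather than $\#\text{rows}-1$, but, as observed above, applying the rule twice makes that shift invisible.
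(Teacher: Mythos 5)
Your proof is correct and follows exactly the route the paper intends: the paper gives no details beyond the remark that the lemma is "easily proven using the Murnaghan--Nakayama rule," and your argument (reindexing via $z(\lambda)=r\lambda_r\,z(\lambda\setminus r)$, two applications of Murnaghan--Nakayama, then column orthogonality) is the natural fleshing-out of that remark, with the sign bookkeeping handled correctly.
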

\noindent In the calculations below, 
we denote by $\theta_r(\mu,\mu')$ the quantity appearing in Lemma~\ref{varpi}.

We now are able to compute the restriction
$$\frac{1}{s_1+s_2}\widehat{\mathsf{S}}^{\mu}_{(d)}\Big|_{s_2=-s_1} $$
in terms of 
$\theta_r$ and the eigenvalues $w(\lambda)$: 
\footnotesize
\begin{multline*} 
%\frac{1}{s_1+s_2}\widehat{\mathsf{S}}^{\mu}_{(d)}\Big|_{s_2=-s_1} 
%= \\
\sum_{r=1}^d\left(\frac{(-1)^{d-1}s_1^{d-1}(d-1)!}{\dim\mu}\sum_\nu \chi^\nu((d))
\theta_r(\mu,\nu)\frac{r}{w(\mu)-w(\nu)+n_{\mu,\nu} s_3}\right)\frac{(-q)^r}{1-(-q)^r}.
\end{multline*}
\normalsize
As before, $\chi^\nu((d)) = 0$ unless $\nu = \alpha_a$ is a hook. For 
$\theta_r(\mu,\nu)$ to be nonzero, we must have $\mu$ be the union of two hooks, of sizes $d-r$ and $r$. 
The integers $n_{\mu,\nu}$ which arise will not affect the answer.

If we multiply by the descendent and edge factors and take the constant term in $\frac{s_3}{s_1}$, we obtain an expression for \eqref{hhthh} of the form
\[
\sum_{r=1}^d B_r\frac{(-q)^r}{1-(-q)^r}.
\]
The explicit formulas for $B_r$ depend upon two cases.
For $B_d$, we have
\footnotesize
\begin{multline*} 
(-1)^d d \cdot d!(d+2)!\ B_d =\\ 
\sum_{a+b=d-1}(-1)^a\binom{d-1}{a}\sum_{\substack{i=-b \\ i\ne 0}}^a\frac{1}{i}\left(-(-b-1)^{d+2}+(-b)^{d+2}+a^{d+2}-(a+1)^{d+2}\right)\ ,
\end{multline*}
\normalsize
and for $r<d$, we have
\footnotesize
\begin{multline*} 
\frac{(-1)^{d+r}}{2} d\cdot d!(d+2)!\ B_r =\\
\sum_{\substack{a+b=d-1 \\ a\ge r \\ 0\le c\le r-1}}(-1)^{a+c}\binom{d-r-1}{a-r}\binom{r-1}{c}\frac{(a-r-c)(b+c+1-r)}{(a-c)^2(b+c+1)} 
\\ \cdot
\Bigg(-(-b-1)^{d+2}+(-b)^{d+2}+(a-r)^{d+2}-(a-r+1)^{d+2}
\\ -(c-r)^{d+2}+(c-r+1)^{d+2}+c^{d+2}-(c+1)^{d+2}\Bigg)\ .
\end{multline*}
\normalsize 

\subsection{Classical pairing}
We compute 
the classical pairing $\langle \tau_d,\CC_{(d[0])}\rangle$. 
 The simpler pairing $\langle \tau_{d-1},\CC_{(d[0])}\rangle$ 
is needed for the calculation and is addressed
first. 
\begin{lem}\label{dminusone}
$\langle \tau_{d-1},\CC_{(d[0])}\rangle = \frac{1}{d!}$
\end{lem}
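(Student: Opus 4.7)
The plan is to evaluate this classical $T$-equivariant pairing on $\text{Hilb}(\C^2,d)$ by localization at the $T$-fixed points. Because the asserted answer $1/d!$ is a rational constant, it suffices to work modulo $s_1+s_2$; I will specialize to $s_2 = -s_1$ throughout, which eliminates most of the equivariant bookkeeping and leaves only a numerical identity to verify.

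First I would rewrite $\CC_{(d[0])} = s_1 s_2\,\CC_{(d)}$ in the $T$-fixed-point basis $\{\JJ_\lambda\}$ by inverting the change-of-basis formula \eqref{changebasis} via the orthogonality of symmetric group characters. This yields, modulo $s_1+s_2$,
\[
\CC_{(d)} \;\equiv\; -\frac{1}{d\cdot d!\cdot s_1^{d+1}}\sum_{\lambda\vdash d}\chi^\lambda((d))\,(\dim\lambda)\,\JJ_\lambda.
\]
Since $\chi^\lambda((d))$ vanishes outside hooks, only the partitions $\alpha_a=(a+1,1^{d-1-a})$ for $0\le a\le d-1$ survive, with $\chi^{\alpha_a}((d))=(-1)^{d-1-a}$ and $\dim\alpha_a=\binom{d-1}{a}$. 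Using that $\JJ_{\alpha_a}$ restricts to $e(T_{\alpha_a}\text{Hilb})$ at its own fixed point and vanishes elsewhere, the localization formula collapses the pairing into
\[
\big\langle\tau_{d-1},\CC_{(d[0])}\big\rangle \;=\; \frac{1}{d\cdot d!\cdot s_1^{d-1}}\sum_{a=0}^{d-1}(-1)^{d-1-a}\binom{d-1}{a}\,\tau_{d-1}\big|_{I_{\alpha_a}}\bigg|_{s_2=-s_1}.
\]

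Next I would evaluate $\tau_{d-1}|_{I_{\alpha_a}}$ using the vertex formula of Section~\ref{ddff}, omitting the $s_3$-dependence since we are on $\text{Hilb}(\C^2,d)$ rather than the cap. Setting $s_2=-s_1$, the box weights of the hook $\alpha_a$ collapse onto the one-dimensional slice $\{is_1 : -b\le i\le a\}$ with $b=d-1-a$, so that the relevant generating series telescopes into just four exponentials with exponents $(a+1)zs_1$, $az s_1$, $-bz s_1$, $-(b+1)z s_1$. Extracting the coefficient of $z^{d+1}$ expresses $\tau_{d-1}|_{I_{\alpha_a}}$ as a clean four-term alternating combination of $(d+1)$-st powers of $-b-1,\,-b,\,a,\,a+1$, proportional to $s_1^{d-1}$, and cancels the $s_1^{d-1}$ in the denominator above.

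The lemma then reduces to the purely numerical identity
\[
\sum_{a=0}^{d-1}(-1)^{d-1-a}\binom{d-1}{a}\Bigl[\bigl((a+1)^{d+1}-a^{d+1}\bigr)+(-1)^{d+1}\bigl((b+1)^{d+1}-b^{d+1}\bigr)\Bigr] \;=\; d\cdot(d+1)!,
\]
where $b=d-1-a$. I expect this combinatorial identity to be the main (though routine) obstacle. The natural tool is the calculus of finite differences: the operator $\sum_a(-1)^{d-1-a}\binom{d-1}{a}(\,\cdot\,)$ is the $(d-1)$-st forward difference $\Delta^{d-1}$ evaluated at $0$. Applied to the first bracketed expression $(x+1)^{d+1}-x^{d+1}$, this produces $\Delta^{d}[x^{d+1}](0) = d!\,S(d+1,d) = d!\binom{d+1}{2}= d(d+1)!/2$; the second bracketed expression is converted into the first by the substitution $a\leftrightarrow b$, contributing an equal $(-1)^{d-1}$-twisted copy. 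Summing the two pieces gives the required $d\cdot(d+1)!$, and undoing the constants verifies the value $1/d!$.
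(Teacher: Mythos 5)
Your proposal is correct and follows essentially the same route as the paper: localization at the hook fixed points after reducing modulo $s_1+s_2$, leading to the same alternating binomial sum, which you evaluate via finite differences where the paper extracts the degree-$(d-1)$ leading coefficient of the bracket as a polynomial in $a$ — the same computation in slightly different packaging. The constants all check out (in particular $\Delta^{d}[x^{d+1}](0)=d!\binom{d+1}{2}$ and the sign of the $a\leftrightarrow b$ reflected copy combine to give $d\cdot(d+1)!$ as claimed).
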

\begin{proof}
By dimension counting, the pairing has no dependence on $s_1$ and $s_2$, 
so we can work mod $s_1+s_2$. Localization then yields
\footnotesize
\begin{multline*}
(-1)^d {d\cdot d!(d+1)!}\
\Big\langle \tau_{d-1},\CC_{(d[0])}\Big\rangle 
\\ = \sum_{a+b=d-1}(-1)^a\binom{d-1}{a}\left(-(-b-1)^{d+1}+(-b)^{d+1}+a^{d+1}-(a+1)^{d+1}\right)
\end{multline*}
\normalsize
If we rewrite $-(-b-1)^{d+1}+(-b)^{d+1}+a^{d+1}-(a+1)^{d+1}$ as a polynomial in $a$ alone, the leading term $-(d+1)d^2a^{d-1}$. Then,
\[
\Big\langle \tau_{d-1},\CC_{(d[0])}\Big\rangle = \frac{(-1)^d}{d\cdot d!(d+1)!}(-(d+1)d^2)(-1)^{d-1}(d-1)! = \frac{1}{d!}
\]
since the contributions of all the lower terms are 0.
\end{proof}

We cannot compute $\langle \tau_d,\CC_{(d[0])}\rangle$ in the same way, since we cannot work mod $s_1+s_2$ (as we know the answer is a multiple of $s_1+s_2$). 
Instead we work mod $s_2$ and consider the function
\[
f(k) = (k+1)!s_1^{d-1-k}\Big\langle \tau_k,\CC_{(d[0])}\Big\rangle\Big|_{s_2=0}\ .
\]
We can compute by localization that $f$ is of the form
\[
\sum_{i=1}^dc_ii^{k+1}
\] 
for some constants $c_i\in\Q$. We also know 
$$f(0) = f(1) = \cdots = f(d-2) = 0$$
by dimension constraints. By Lemma~\ref{dminusone}, we have $f(d-1)=1$. 
Interpolation then gives $f(d)=\frac{d(d+1)}{2}$. We conclude the
following result.
\begin{prop}\label{classicald}
\[
\Big\langle \tau_{d},\CC_{(d[0])}\Big\rangle = \frac{s_1+s_2}{2\cdot(d-1)!}.
\]
\end{prop}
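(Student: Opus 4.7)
By Lemma~\ref{pwpw}, $\langle \tau_d,\CC_{(d[0])}\rangle$ is a rational multiple of $s_1+s_2$, so a single rational number determines it; I would pin this number down by specializing to $s_2=0$. Following the shape of Lemma~\ref{dminusone}, define
\[
f(k)\;=\;(k+1)!\,s_1^{d-1-k}\Big\langle \tau_k,\CC_{(d[0])}\Big\rangle\Big|_{s_2=0}\;\in\;\Q
\]
for $k\geq 0$. The virtual dimension count shows $\langle \tau_k,\CC_{(d[0])}\rangle$ has $(s_1,s_2)$-degree $k-d+1$, which forces $f(k)=0$ for $0\leq k\leq d-2$. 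Lemma~\ref{dminusone} supplies the further value $f(d-1)=1$. Establishing $f(d)=\tfrac{d(d+1)}{2}$ will yield $\langle \tau_d,\CC_{(d[0])}\rangle=\tfrac{s_1+s_2}{2(d-1)!}$ upon undoing the normalization.

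\textbf{Structural form of $f$.} The core claim is that
\[
f(k)\;=\;\sum_{i=1}^{d} c_i\,i^{k+1}
\]
for constants $c_i\in\Q$ that depend on $d$ but not on $k$. I would establish this by $T$-equivariant localization on $\mathrm{Hilb}(\C^2,d)$. In the classical setting (no $\PP^1$, so all $c_{a,b}=0$ in the vertex formula of Section~\ref{ddff}), the restriction of $\tau_k$ to the fixed point $\mu$ is
\[
\frac{1}{s_1 s_2}\,\mathrm{Coeff}_{z^{k+2}}\!\left((1-e^{zs_1})(1-e^{zs_2})\sum_{(a,b)\in\mu}e^{z(as_1+bs_2)}\right).
\]
Specializing $s_2=0$ collapses $(1-e^{zs_2})/s_2$ to $-z$, so this expression reduces to $\tfrac{s_1^k}{(k+1)!}\sum_{(a,b)\in\mu}\bigl[(a+1)^{k+1}-a^{k+1}\bigr]$, a $\Z$-linear combination of $i^{k+1}$ for $i\in\{1,\ldots,d\}$ with coefficients depending only on $\mu$. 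Summing over $\mu$ against the localization weights and the fixed-point expansion of $\CC_{(d[0])}$ (both independent of $k$) preserves this form, proving the claim.

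\textbf{Interpolation.} Once the structural form is in hand, the value $f(d)$ follows formally. Consider $P(x)=x(x-1)(x-2)\cdots(x-d)$, which vanishes at $x=0,1,\ldots,d$ and expands as $x^{d+1}-\tfrac{d(d+1)}{2}\,x^d+\sum_{j=1}^{d-1}\beta_j\,x^j$. Evaluating $P(i)=0$ for $1\le i\le d$, multiplying by $c_i$, and summing yields
\[
f(d)\;=\;\sum_{i=1}^d c_i\,i^{d+1}\;=\;\tfrac{d(d+1)}{2}\,f(d-1)\;-\;\sum_{j=1}^{d-1}\beta_j\,f(j-1)\;=\;\tfrac{d(d+1)}{2},
\]
since $f(0)=\cdots=f(d-2)=0$ and $f(d-1)=1$.

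\textbf{Main obstacle.} The chief technical obstacle is the structural claim for $f(k)$. This demands a careful treatment of the fixed-point restrictions of $\CC_{(d[0])}$ at $s_2=0$: the mod-$(s_1+s_2)$ change-of-basis formula~\eqref{changebasis} does not directly apply here, and one must redo the analysis along $s_2=0$, tracking the cancellation between the $s_1s_2$ factor in the weight $[0]$ and the vanishing tangent weights of $\mathrm{Hilb}(\C^2,d)$ at that specialization. Once this expansion is in place, the remainder of the argument is elementary.
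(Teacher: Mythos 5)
Your proposal follows essentially the same route as the paper's own proof: specialize to $s_2=0$, define $f(k)=(k+1)!\,s_1^{d-1-k}\langle\tau_k,\CC_{(d[0])}\rangle|_{s_2=0}$, show by localization that $f(k)=\sum_{i=1}^d c_i i^{k+1}$, use the dimension vanishings $f(0)=\cdots=f(d-2)=0$ and $f(d-1)=1$ from Lemma~\ref{dminusone}, and interpolate to get $f(d)=\tfrac{d(d+1)}{2}$. Your added details (the $(1-e^{zs_2})/s_2\to -z$ collapse and the explicit interpolation via $P(x)=x(x-1)\cdots(x-d)$) are correct elaborations of steps the paper leaves implicit.
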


\subsection{Proof of Theorem \ref{ytytyt}}
\label{hhttyy}
We have
\[
F(d) = F_0(d) + \sum_{r=1}^d (A_r+B_r)\frac{(-q)^r}{1-(-q)^r}\ .
\]
Although the formulas
for $A_r$ and $B_r$ calculated in Sections \ref{vcv} - \ref{rcr} are 
 very complicated, a wonderful  
combinatorial identity holds:
\begin{equation} \label{gthh3}
A_r+B_r = \frac{1}{d!}
\end{equation}
for all $1\leq r \leq d$. The proof of \eqref{gthh3} is by straightforward 
manipulation using a few standard binomial sum identities. In the case $r = d$, all that is needed is the identity
\[
\sum_{i = 0}^m (-1)^i\binom{m}{i}(a_mi^m + a_{m-1}i^{m-1}+ \cdots + a_0) = (-1)^mm!\cdot a_m.
\]
For $r < d$, the following two identities must also be used to compute the sum over $c$ in the expression for $B_r$:
\[
\sum_{i=0}^m(-1)^i\binom{m}{i}\frac{1}{x+i} = \frac{1}{(m+1)\binom{x+m}{m+1}}
\]
\[
\sum_{i=0}^m(-1)^i\binom{m}{i}\frac{1}{(x+i)^2} = \frac{1}{(m+1)\binom{x+m}{m+1}}\sum_{i=0}^m\frac{1}{x+i}.
\]

Combined with Proposition~\ref{classicald}, the identity \eqref{gthh3} yields
\[
F(d) = \frac{1}{2\cdot d!}\sum_{r=1}^d \frac{1+(-q)^r}{1-(-q)^r},
\]
which completes the proof of Theorem~\ref{ytytyt}. \qed

The same method of computation actually yields a relatively simple formula for a larger family of invariants.
Suppose that $m_1,\ldots,m_k$ are positive integers satisfying
$$\sum_{i=1}^k m_i = d\ .$$
Lemma \ref{pwpw} relied only on a dimension analysis which
 also applies to $\mathsf{Z}^{\mathsf{cap}}_{d,(d)}(\tau_{m_1}([0])\cdots\tau_{m_k}([0]))^{\T}$, so we can expect the
 series to be relatively simple. In fact, we can prove
\begin{multline}\label{pr44}
\mathsf{Z}^{\mathsf{cap}}_{d,(d)}(\tau_{m_1}([0])\cdots\tau_{m_k}([0]))^{\T} = \\
\frac{q^d}{m_1!\cdots m_k!}\left(\frac{s_1+s_2}{s_1s_2}\right)\frac{1}{2}\sum_{r=1}^dC_r(m_1,\ldots,m_k)\frac{1+(-q)^r}{1-(-q)^r}
\end{multline}
for coefficients
\[
C_r(m_1,\ldots,m_k) = \sum_{\substack{I \subset \{1,\ldots,k\} \\ \sum_{i\in I}m_i < r}}r^{|I|-1}(d-r)^{k-|I|-1}\left(r - \sum_{i\in I}m_i\right).
\]
For example, we have
\begin{multline*}
\mathsf{Z}^{\mathsf{cap}}_{3,(3)}(\tau_{1}([0])\tau_{2}([0]))^{\T} = \\
\frac{q^3}{2}\left(\frac{s_1+s_2}{s_1s_2}\right)\frac{1}{2}\left(2\cdot\frac{1-q}{1+q}+2\cdot\frac{1+q^2}{1-q^2}+3\cdot\frac{1-q^3}{1+q^3}\right).
\end{multline*}

When proving \eqref{pr44} by the method used for Theorem~\ref{ytytyt}, 
the leading classical term requires the 
somewhat surprising identity
\[
\sum_{r=1}^d C_r(m_1,\ldots,m_k) = \sum_{\substack{f:\{1,\ldots,k\}\to\{1,\ldots,k\} \\ \text{$f$ has only one periodic orbit}}}\prod_{i=1}^k m_{f(i)}\ .
\]
We can prove this identity by showing that both sides satisfy the same recurrence equation as a function of $m_1,\ldots,m_k$:
\begin{align*}
F(m_1,\ldots,m_k) = m_k^2(m_1+\cdots+m_k)^{k-2} &+ m_1F(m_1+m_k,m_2,\ldots,m_{k-1}) \\ &+ m_2F(m_1,m_2+m_k,\ldots,m_{k-1}) \\ &+ \cdots \\ &+ m_{k-1}F(m_1,m_2,\ldots,m_{k-1}+m_k).
\end{align*}

Finally, we can also use the same method of computation to obtain the analogous normalized Donaldson-Thomas partition functions. Surprisingly, these are equal to the stable pairs partition functions except in degree one:
\begin{equation*}
\mathsf{Z}^{\mathsf{DT,cap}}_{d,(d)}( \tau_d(\mathsf{p}))^T = \begin{cases}
\mathsf{Z}^{\mathsf{cap}}_{d,(d)}( \tau_d(\mathsf{p}))^T &\text{if }d > 1 \\
\mathsf{Z}^{\mathsf{cap}}_{d,(d)}( \tau_d(\mathsf{p}))^T + q\left(\frac{s_1+s_2}{s_1s_2}\right)q\frac{d}{dq}\log(M(-q))&\text{if }d = 1. \end{cases}
\end{equation*}
Here $M(q) = \prod(1-q^r)^{-r}$ is the MacMahon function.

\vspace{+16 pt}
\noindent Departement Mathematik \hfill Department of Mathematics \\
\noindent ETH Z\"urich \hfill  Princeton University \\
\noindent rahul@math.ethz.ch  \hfill rahulp@math.princeton.edu \\

\vspace{+8 pt}
\noindent
Department of Mathematics\\
Princeton University\\
apixton@math.princeton.edu

\end{document}